\numberwithin{equation}{section}
\newtheorem{teo}{Teorema}[section]
\newtheorem{lem}[teo]{Lema}
\newtheorem{prop}[teo]{Proposici\'on}
\newtheorem{ex}[teo]{Ejemplo}
\newtheorem{obs}[teo]{Observación}
\newtheorem*{thm*}{Teorema}
\newtheorem*{prop*}{Proposici\'on}
\def\im{\mathop{\mbox{\normalfont im}}}
\def\cok{\mathop{\mbox{\normalfont coker}}}
\DeclareMathOperator{\Hom}{Hom}
\DeclareMathOperator{\md}{mod}
\newtheorem*{defi*}{Definition}
\DeclareMathOperator{\ad}{ad}
\DeclareMathOperator{\g}{\mathfrak{g}}
\newcommand{\mc}[1]{\mathcal{#1}} 
\newcommand{\mb}[1]{\mathbb{#1}} 
\newcommand{\mf}[1]{\mathfrak{#1}} 
\newcommand{\fc}[3]{{#1}: {#2} \rightarrow {#3}}
\def\im{\mathop{\mbox{\normalfont im}}}
\def\cok{\mathop{\mbox{\normalfont coker}}}
\title{Introducci\'on a los $\mc{D}$-módulos}
\author{Juan Camilo Arias y Camilo Rengifo}
\address{Departamento de Matem\'{a}ticas, Universidad de los Andes. Bogotá, Colombia}
\email{jc.arias147@uniandes.edu.co}
\address{Facultad de Ingenier\'ia, Universidad de la Sabana, Ch\'ia, Colombia}
\email{camilo.rengifo@unisabana.edu.co }
\subjclass[2010]{Primary 1402; Secondary 17B10.}
\keywords{$\mathcal{D}$-m\'odulos, Haces, variedades algebraicas suaves, \'algebras de Lie}
\begin{document}
\maketitle

{\selectlanguage{spanish}
\begin{abstract}
    Estas notas son las memorias del cursillo dictado en el XXII Congreso Colombiano de Matemáticas en la Universidad del Cauca en Popayán - Colombia. El objetivo de este escrito es brindar un acercamiento a la teoría de módulos sobre el anillo de operadores diferenciales de una variedad algebraica suave. 
    
    \keywords{$\mathcal{D}$-m\'odulos, Haces, variedades algebraicas suaves, \'algebras de Lie}
\end{abstract}}

\begin{abstract}
These are the lecture notes of a short course given at the XXII Colombian Congress of Mathematics held at Universidad del Cauca in Popayán - Colombia. The aim of this paper is to provide an introduction to the theory of modules over rings of differential operators over a smooth algebraic variety.%
    
    \keywords{$\mathcal{D}$-modules, sheaves, smooth algebraic variety, Lie algebras}
\end{abstract}

\section{Introducción}

La teoría de los $\mathcal{D}$-m\'odulos tiene su origen como parte del análisis algebraico\footnote{El término análisis algebraico es una palabra atribuida a M. Sato quien buscó estudiar propiedades de funciones y distribuciones analizando los operadores diferenciales lineales que se anulan en estos objetos mediante la teoría de haces, \cite{KKK}.} de la escuela japonesa de Kyoto, liderada por M. Sato y M. Kashiwara entre otros \cite{S1}, \cite{S2}, \cite{S3}, \cite{K}. Uno de los principales objetivos fue el estudio de soluciones de sistemas de ecuaciones diferenciales utilizando herramientas como la teoría de anillos, el álgebra homológica y la teor\'ia de haces. Paralelamente, el matemático ruso-israel\'i J. Bernstein introdujo los $\mathcal{D}$-m\'odulos en los artículos \cite{B2} y \cite{B3} desde el enfoque del análisis complejo. Concretamente, J. Bernstein consideró un polinomio $P$ en $n$ variables complejas y mostró que la función $\mathcal{P}(s)=|P|^{s}$, para $Re(s)>0$, extiende a una función meromorfa de $s$ sobre todo el plano complejo, y tal que toma valores en distribuciones de $\mathbb{C}^n$.  Algunos de los resultados que se desprenden del estudio de los $\mathcal{D}$-m\'odulos son las hiper-funciones de Sato, el análisis microlocal, aplicaciones a la geometría algebraica, la teoría de representaciones y la física matemática.\\

Los $\mathcal{D}$-módulos han mostrado ser de gran utilidad ya que mediantes el uso de $\mathcal{D}$-m\'odulos (holonómicos regulares) se resuelve el problema $21$ de Hilbert, como consecuencia de la correspondencia de Riemann-Hilbert. Adicionalmente, la teoría de los $\mathcal{D}$-m\'odulos provee el contexto apropiado para resolver las conjeturas de Kazhdan-Lusztig.\\

A grosso modo, la correspondencia de Riemann-Hilbert responde a lo siguiente. La monodromía asociada a un sistema lineal de ecuaciones diferenciales da pié a una representación del grupo fundamental del espacio base. Ahora bien, si se tiene una representación del grupo fundamental del espacio base, ¿es posible encontrar un sistema de ecuaciones diferenciales tal que su monodromía coincida con la representación fijada previamente?\\ 

Por otro lado, otra de las grandes aplicaciones de la teoría de los $\mathcal{D}$-m\'odulos es el teorema de localizaci\'on de Beilinson y Bernstein, el cual da una equivalencia entre la categor\'ia de representaciones de un álgebra de Lie semisimple y los $\mathcal{D}$-módulos holonómicos y regulares. \\

Como aplicaci\'on del teorema de localización y de la correspondencia de Riemann - Hilbert se pueden demostrar las conjeturas de Kazhdan-Lusztig, las cuales buscan las f\'ormulas de caracteres para las representaciones irreducibles (finito o infinito dimensionales) de un álgebra de Lie semisimple.\\


A continuación, se ilustra brevemente cómo el uso de los $\mathcal{D}$-módulos permite encontrar el espacio de solución a un sistema de ecuaciones diferenciales desde el punto de vista algebraico. Las nociones que se usan se definirán más adelante, \cite{RTT}.\\

Sea $\mathbb{C}$ el campo de los números complejos. Considere un operador diferencial $P=\sum_{\alpha} g_{\alpha}\partial^{\alpha}$, donde $g_{\alpha}\in \mathbb{C}[X_1, \ldots, X_n]$, $\alpha\in \mathbb{N}^{n}$ es un multi-\'indice $\alpha=(\alpha_1, \ldots, \alpha_n)$ y $\partial^{\alpha}=\partial_1^{\alpha_1}\cdots\partial_n^{\alpha_n}$. Estamos interesados en resolver el problema $P(f)=0$, para $f$ una función polinomial. \\

En general, podemos considerar un sistema  $$ \sum_{j=1}^q P_{ij}(f_j) = 0 $$ con $i=1, \ldots p$ y $P_{ij}$ operadores diferenciales. Este sistema lo podemos escribir de manera matricial como $[P_{ij}][f_j]^T=0$. Para solucionar este sistema de ecuaciones diferenciales se debe encontrar un módulo sobre el anillo de operadores diferenciales. Concretamente, el anillo de operadores diferenciales $D_n$ asociado al anillo de polinomios $\mathbb{C}[X_1,\ldots, X_n]$, está generado por las variables $X_1, \ldots, X_n$ y las derivadas parciales $\partial_1, \ldots, \partial_n$, sujetas a las siguientes relaciones $X_iX_j=X_jX_i$, $\partial_i\partial_j=\partial_j\partial_i$ y la regla de Leibniz. Es decir, 

$$D_n := k[X_1, \ldots, X_n, \partial_1, \ldots, \partial_n]/\langle X_iX_j - X_jX_i , \partial_i\partial_j - \partial_j\partial_i , \partial_iX_i - X_i\partial_i - 1, i,j=1, \ldots, n \rangle $$
\vspace{0.3cm}

Ahora bien, la matriz $[P_{ij}]$ define una aplicación $D_n$-lineal $\cdot[P_{ij}]:D_n^p \rightarrow D_n^q $ definada por $(u_1, \ldots, u_p)\mapsto (\sum_i u_iP_{i1}, \ldots, \sum_i u_iP_{iq})$, cuya imagen es el ideal $J = \langle u_1(\sum_{j=1}^{q}P_{1j}) + \cdots +   u_p(\sum_{j=1}^{q}P_{pj}): u_1, \ldots, u_p\in D_n \rangle$. De lo cual se tiene la secuencia exacta

\begin{equation} \label{coherente} \xymatrix{ D_n^p \ar[r] & D_n^q \ar[r] & D_n^q/J \ar[r] & 0} \end{equation}

El módulo $D_n^q/J$ representa el sistema de ecuaciones diferenciales en cuestión, \cite{S3}. Explícitamente, si $f_1, \ldots, f_q$  son soluciones del sistema, se define una aplicación $\mathbb{C}$-lineal $\sigma : D_n^q \rightarrow \mathbb{C}[X_1, \ldots, X_n]$, $e_i\mapsto f_i$, donde $\{e_i\}$ es la base canónica de $D_n^q$. Note que para $Q\in D_n^q$, se tiene que $\sigma(Q)=Q^T(f_1, \ldots, f_q)$. Así pues, $\sigma(J)=0$ luego $\sigma$ induce una aplicación en el cociente $\overline{\sigma}: D_n^q/J \rightarrow \mathbb{C}[X_1, \ldots, X_n]$. A su vez, si $\tau:D_n^q/J \rightarrow \mathbb{C}[X_1, \ldots, X_n]$ es una aplicación $\mathbb{C}$-lineal, existe una aplicación $\hat{\tau}: D_n^q \rightarrow \mathbb{C}[X_1, \ldots, X_n]$ $\mathbb{C}$-lineal tal que $\hat{\tau}(J)=0$, es decir $\hat{\tau}$ induce la aplicación $\tau$ en el cociente. Defina $g_i=\hat{\tau}(e_i)$, luego para $Q\in J$ se tiene que $Q^T(g_1, \ldots, g_q) = \hat{\tau}(Q)=0$, es decir, $(g_1, \ldots, g_q)$ es solución del sistema en cuestión. Entonces, las soluciones al sistema de ecuaciones diferenciales corresponden al espacio vectorial $\Hom_{\mathbb{C}}(D_n^q/J, \mathbb{C}[X_1, \ldots, X_q])$.\\

Dicho de otra forma, si a la secuencia \eqref{coherente} se le aplica el funtor $\Hom_\mathbb{C}( \_  ,\mathbb{C}[X_1, \ldots, X_n])$, se obtiene la secuencia exacta

\begin{equation}  \xymatrix{ 0 \ar[r] & \Hom_\mathbb{C}( D_n^q/J ,\mathbb{C}[X_1, \ldots, X_n]) \ar[r] & \Hom_\mathbb{C}( D_n^q  ,\mathbb{C}[X_1, \ldots, X_n]) \ar[r]^{[P_{ij}]\cdot} &  \Hom_\mathbb{C}( D_n^p  ,\mathbb{C}[X_1, \ldots, X_n]) }\end{equation}

donde $[P_{ij}]\cdot(\sigma) = [P_{ij}][f_i]^T$ para $\sigma\in\Hom_\mathbb{C}(D_n^q,\mathbb{C}[X_1, \ldots, X_n]$ dada por $\sigma(e_i)= f_i$. Note que, 
$$\Hom_\mathbb{C}(D_n^q/J ,\mathbb{C}[X_1, \ldots, X_n]) = \ker([P_{ij}]\cdot)$$ corresponde precisamente al espacio de soluciones del sistema de ecuaciones diferenciales dado.\\

Por lo tanto, para un sistema de ecuaciones diferenciales $P$ existe un módulo $M_P:=D_n^m/J$, para algunos $n$ y $m$ números naturales y $J$ el ideal generado por $P$, de tal forma que el espacio de soluciones de $P$ corresponde al espacio vectorial $\text{Sol}(M_p):=\Hom_\mathbb{C}(M_P,\mathbb{C}[X_1, \ldots, X_n])$. Adicionalmente, con este método algebraico se pueden obtener soluciones generales al sistema $P$.  Por ejemplo, si $\mathbb{C}[X_1, \ldots, X_n]$ se reemplaza por el anillo de funciones suaves de un abierto $U$ de $\mathbb{R}^n$ se obtendría soluciones suaves de $P$. \\

Nótese que el módulo $M_P$ puede tener diferentes representaciones explícitas. Es decir, pueden existir operadores $P$ y $P'$ tales que $M_P\cong M_{P'}$ como $D_n$-módulos. Por ejemplo, (ver \cite{kaBook} pp. xiii), en una variable $x$, considere el operador diferencial $P=x\frac{d}{dx} - \lambda$ y la ecuación diferencial $P(f)=0$, donde $f$ es una función con valores complejos. Si tomamos $g=xf$, la ecuación diferencial se puede reescribir vía el cálculo
\[
P(g)=P(xf)=xf+x^2\frac{d}{dx}f-\lambda x f,
\]
de lo cual se tiene que $P(g)-g=(P-1)(g)=0$.
Recíprocamente, si consideramos la ecuación $(P-1)g=0$, y $\lambda\neq -1$ obtenemos que $f=\frac{1}{\lambda+1}\frac{dg}{dx}$ satisface $P(f)=0$,
\[
P\left(\frac{1}{\lambda-1}\frac{dg}{dx}\right)=\frac{1}{\lambda+1}\left(\frac{d}{dx}\left(x\frac{dg}{dx}\right)-\frac{dg}{dx}-\lambda\frac{dg}{dx}\right)=\frac{1}{\lambda+1}\frac{d}{dx}\left(x\frac{dg}{dx}-\lambda g-g\right)=0.
\]

Más aún, si $g=xf$ con $P(f)=0$ entonces 

$$\frac{1}{\lambda+1} \frac{dg}{dx} - f = \frac{1}{\lambda+1}\left(\frac{d}{dx}(xf)\right) - f= \frac{1}{\lambda+1}\left(f+x\frac{df}{dx}\right)-\frac{\lambda+1}{\lambda+1}f=\frac{1}{\lambda+1}\left(x\frac{df}{dx}-\lambda f\right)=0.$$

Análogamente si $f=\frac{1}{\lambda+1}\frac{d}{dx}g$ con $(P-1)g=0$, tenemos que $g=xf$.\\

Por lo tanto vemos que las ecuaciones $P(f)=0$ y $(P-1)(g)=0$ son equivalentes. En términos de $D_n$-módulos esto significa que $M_P\cong M_{P-1}$, y por lo tanto el espacio de soluciones a esta (y a cualquiera equivalente a ella) ecuación diferencial está dado por $\Hom_{\mathbb{C}}(M_P, \mathbb{C}[X]) \cong \Hom_{\mathbb{C}}(M_{P-1}, \mathbb{C}[X])$.\\

El art\'iculo est\'a dividido en cinco secciones. En la primera, los $\mathcal{D}$-m\'odulos se estudian desde el punto de vista local, es decir, dado un anillo conmutativo $R$, estudiaremos los m\'odulos sobre el anillo de operadores diferenciales de $R$. Las herramientas necesarias para entender conceptos como variedad caracter\'istica y m\'odulos holon\'omicos se presentarán para ofrecer una explicaci\'on autocontenida. La segunda secci\'on recapitula los conceptos necesarios de la teor\'ia de haces y variedades algebraicas para entender el enfoque global de la teor\'ia de $\mathcal{D}$-módulos. En la tercera sección se presentarán los conceptos de haces y variedades algebraicas indispensables para el estudio global de los $\mathcal{D}$-módulos. En la cuarta sección se buscar\'a introducir muy brevemente los $\mathcal{D}$-m\'odulos globalmente. Se dar\'a la definici\'on de $\mathcal{D}$-m\'odulo para una variedad algebraica suave. Adicionalmente, se mostrar\'a c\'omo conectar estructuras de $\mathcal{D}$-m\'odulo con conexiones planas de un fibrado vectorial y se presentar\'a una lista de ejemplos para ilustrar los conceptos introducidos. Finalmente, en la quinta secci\'on, se presentan algunas aplicaciones a teor\'ia de representaciones y se dar\'a una versi\'on ligera de la correspondencia de Riemann-Hilbert. Es importante decir que estas notas no contienen ning\'un resultado original, todo el material que se presenta corresponde a la forma en que los autores decidieron dar el cursillo.

\section{Nociones preliminares}

 En esta primera sección se definirá el anillo de operadores diferenciales para $\mathbb{C}[X_1,X_2,...,X_n]$ y se presentarán algunas de sus propiedades. Adicionalmente, se darán las definiciones de álgebra de Lie y de módulos sobre un álgebra de Lie. Los resultados de esta sección se pueden encontrar en \cite{DrM} y \cite{H1}. \\

\subsection{Generalidades} Sea $A$ una $\mathbb{C}$-\'algebra conmutativa. Se denota por $\text{End}_{\mathbb{C}}(A)$ la $\mb{C}$-álgebra asociativa de endomorfismos $\mathbb{C}$-lineales de $A$ y por $\text{End}_{A}(A)$ el espacio vectorial de endomorfismos $A$-lineales de $A$. Se sigue que $\text{End}_{A}(A)$ es una sub\'algebra de $\text{End}_{\mathbb{C}}(A)$. El lector puede verificar que $A\cong \text{End}_{A}(A)$ como $\mathbb{C}$-álgebras via el isomorfismo  $a\in A\mapsto \hat{a}:A\to A$, donde por definici\'on $\hat{a}(b)=ab$ es la multiplicaci\'on en $A$. Por tanto, el producto en $A$ coincide con la composición en $\text{End}_{A}(A)$. De ahora en adelante, los elementos de $A$ se identificarán con los endomorfismos de $A$ que son $A$-lineales, i.e. $A\subset\text{End}_{\mathbb{C}}(A)$.\\

 El subespacio vectorial de las derivaciones de $A$ est\'a definido por 
\[
\text{Der}_{\mathbb{C}}(A):=\{T\in \text{End}_{\mathbb{C}}(A) |\, T(ab)=aT(b)+bT(a)\}.
\]

 Para $\phi,\psi\in\text{End}_{\mathbb{C}}(A)$, el conmutador $[\,,\,]$ de $\text{End}_{\mathbb{C}}(A)$ est\'a definido por
$[\phi,\psi]:=\phi\circ\psi-\psi\circ\phi$. Por definición se tiene que el conmutador $[\,,\,]:\text{End}_{\mathbb{C}}(A)\times\text{End}_{\mathbb{C}}(A)\to\text{End}_{\mathbb{C}}(A)$ es una aplicación $\mathbb{C}$-bilineal, antisimétrica y cumple la \emph{identidad de Jacobi},
$$[\phi,[\psi,\rho]]=[[\phi,\psi],\rho]+[\psi,[\phi,\rho]].$$ En otras palabras, $\text{End}_{\mathbb{C}}(A)$ es un \'algebra de Lie, (ver secci\'on \ref{Liesec}). Se deja como ejercicio verificar que el conmutador se anula sobre $\text{End}_{A}(A)\cong A$. 

\subsection{Operadores diferenciales sobre A}\label{defop} Sean $T\in\text{End}_{\mathbb{C}}(A)$ y $p\in\mathbb{Z}_{\geq 0}$. Se dice que $T$ es un operador diferencial de orden menor o igual a $p$ si para todo $a_0,a_1,...,a_p\in A$ se tiene que 
\[
[[...[\,[T,a_0],a_1],...,\,],a_p]=0.
\]
Si $T$ tiene orden menor o igual a cero, para todo $a, a_0\in A$, $(T\circ a_0-a_0\circ T)(a)=T(a_0a)-a_0T(a)=0$, es decir $T$ es un endomorfismo $A$-lineal.

\begin{obs} Por definición si $T$ es un operador diferencial de orden menor o igual a $p$, el operador $[T,a]$ tiene grado menor o igual a $p-1$ para todo $a\in A$. \end{obs}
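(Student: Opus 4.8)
The plan is to unwind the definition of order directly, since the claim is essentially a relabeling of the defining condition rather than a substantive computation. First I would recall that, by the definition in Section \ref{defop}, $T$ has order $\leq p$ precisely when the iterated commutator $[[\ldots[[T,a_0],a_1],\ldots],a_p]$ vanishes for every choice of $a_0,a_1,\ldots,a_p\in A$; this is a nesting of depth $p+1$, one bracket for each of the $p+1$ elements. Correspondingly, to say that an operator $S$ has order $\leq p-1$ means that $[[\ldots[[S,b_0],b_1],\ldots],b_{p-1}]=0$ for all $b_0,\ldots,b_{p-1}\in A$; that is, the iterated commutator of $S$ with any $p$ elements of $A$ is zero.

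Next I would fix $a\in A$, set $S=[T,a]$, and substitute $S$ into the order-$(p-1)$ test. This produces the expression $[[\ldots[[[T,a],b_0],b_1],\ldots],b_{p-1}]$, which is nothing but the iterated commutator of $T$ with the $p+1$ elements $a,b_0,b_1,\ldots,b_{p-1}$. Relabeling $a_0=a$ and $a_i=b_{i-1}$ for $i=1,\ldots,p$, this is exactly one of the nested commutators that vanishes by the hypothesis that $T$ has order $\leq p$. Hence $[[\ldots[[S,b_0],b_1],\ldots],b_{p-1}]=0$ for all $b_0,\ldots,b_{p-1}$, which is precisely the statement that $S=[T,a]$ has order $\leq p-1$.

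The only point requiring care, which I would regard as the ``obstacle'' only in the bookkeeping sense and not as a genuine difficulty, is keeping the depth of nesting straight: prepending the single bracket $[\,\cdot\,,a]$ raises the commutator depth by one, so an order-$(p-1)$ test applied to $[T,a]$ (depth $p$) unfolds into an order-$p$ test applied to $T$ (depth $p+1$). I would also treat the boundary case $p=0$ separately, interpreting ``order $\leq -1$'' as the zero operator: there $T$ has order $\leq 0$, so by the remark preceding the definition $T$ is $A$-linear and $[T,a]=0$ for every $a$, and the conclusion holds under this convention. With these conventions fixed, the proof reduces entirely to the substitution and relabeling described above.
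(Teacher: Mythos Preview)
Your argument is correct and is exactly what the paper intends: the observation is stated \emph{por definici\'on} and carries no separate proof in the text, and your unfolding of the iterated-commutator condition (together with the convention $F_{-1}=\{0\}$ already adopted in the paper for the $p=0$ case) is precisely the relabeling that justifies it.
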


\begin{obs}
El orden de un operador diferencial se denotará por $ord()$. Si $T$ es un operador diferencial de orden menor o igual a $p$, entonces $ord(T)\leq p$.
\end{obs}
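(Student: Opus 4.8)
El plan es deducir el enunciado directamente de la definici\'on de $ord(T)$ como el menor entero para el cual $T$ satisface la condici\'on de conmutadores iterados de la definici\'on de operador diferencial; de esta manera, lo \'unico que realmente requiere argumentaci\'on es que tal m\'inimo exista y est\'e bien definido, pues la desigualdad misma es formal.

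Primero verificar\'ia que la propiedad ``$T$ tiene orden $\leq p$'' es cerrada hacia arriba en $p$. En efecto, si el conmutador iterado $[[\dots[[T,a_0],a_1],\dots],a_p]$, con $p+1$ elementos de $A$, se anula para todos los $a_0,\dots,a_p\in A$, entonces al aplicarle $[\,\cdot\,,a_{p+1}]$ se obtiene $[0,a_{p+1}]=0$; es decir, el conmutador iterado con $p+2$ elementos tambi\'en se anula, de modo que $T$ tiene orden $\leq p+1$. Por lo tanto el conjunto $S_T:=\{q\in\mathbb{Z}_{\geq 0}\mid T \text{ tiene orden } \leq q\}$ es cerrado hacia arriba, es decir, un segmento final de $\mathbb{Z}_{\geq 0}$. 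Como por hip\'otesis $T$ es un operador diferencial, $S_T$ es no vac\'io; al estar acotado inferiormente por $0$, el principio de buena ordenaci\'on garantiza la existencia de $\min S_T$, y defino $ord(T):=\min S_T$.

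Con esto el enunciado es inmediato: si $T$ tiene orden $\leq p$, entonces $p\in S_T$ y por lo tanto $ord(T)=\min S_T\leq p$. El \'unico paso que merece atenci\'on es la clausura hacia arriba del p\'arrafo anterior, pues es la que legitima hablar del orden como un m\'inimo y da sentido a la desigualdad; lo dem\'as se sigue formalmente de la definici\'on de m\'inimo, de modo que no anticipo ninguna dificultad real.
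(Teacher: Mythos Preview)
Tu propuesta es correcta. En el art\'iculo esta observaci\'on se presenta como una convenci\'on notacional sin demostraci\'on alguna: simplemente introduce el s\'imbolo $ord()$ y registra la desigualdad como consecuencia inmediata de esa convenci\'on. Tu argumento hace expl\'icito lo que el texto deja impl\'icito, a saber, que $ord(T)$ debe entenderse como el m\'inimo $q$ para el cual $T$ tiene orden $\leq q$, y que tal m\'inimo existe gracias a la clausura hacia arriba de la propiedad ``orden $\leq q$''. Esa verificaci\'on de clausura es exactamente el punto que justifica la definici\'on, y tu demostraci\'on la trata correctamente; el resto es, como bien se\~nalas, una consecuencia formal de la noci\'on de m\'inimo.
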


 El subespacio vectorial de todos los operadores diferenciales sobre $A$ se denota por $\text{Diff}_{\mathbb{C}}(A)$.

\begin{lem}
Suponga que $T,S\,\in\text{Diff}_{\mathbb{C}}(A)$ son de orden menor o igual que $n$ y $m$ respectivamente. Entonces, $T\circ S$ es un operador diferencial de orden menor o igual que $n+m$.
\end{lem}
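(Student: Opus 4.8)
El plan es proceder por inducci\'on sobre la suma $n+m$, usando como herramienta central una regla de Leibniz para el conmutador respecto de la composici\'on. Recordando que $A$ se identifica con los endomorfismos $A$-lineales dentro de $\End_{\mathbb{C}}(A)$, afirmo que para todo $a\in A$ se cumple la identidad
\[
[T\circ S,\, a] = [T,a]\circ S + T\circ [S,a].
\]
Esta identidad se verifica por c\'alculo directo: el lado izquierdo es $T\circ S\circ a - a\circ T\circ S$, y al desarrollar el lado derecho los dos t\'erminos cruzados $T\circ a\circ S$ se cancelan, recuperando exactamente la misma expresi\'on. Este es el paso t\'ecnico medular; aunque es una cuenta rutinaria, es el que articula toda la inducci\'on.

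Con la identidad disponible, recuerdo que, por definici\'on, $T\circ S$ tiene orden menor o igual a $n+m$ si y s\'olo si $[T\circ S, a]$ tiene orden menor o igual a $n+m-1$ para todo $a\in A$. Por la primera observaci\'on de la secci\'on, $[T,a]$ tiene orden menor o igual a $n-1$ y $[S,a]$ tiene orden menor o igual a $m-1$. Aplico entonces la hip\'otesis de inducci\'on a los dos productos del lado derecho: los factores de $[T,a]\circ S$ tienen \'ordenes que suman a lo m\'as $(n-1)+m=n+m-1$, y los de $T\circ [S,a]$ suman a lo m\'as $n+(m-1)=n+m-1$, de modo que ambos productos son operadores diferenciales de orden menor o igual a $n+m-1$. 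Como el conmutador es bilineal, el subespacio de operadores de orden acotado es cerrado bajo sumas, y por tanto $[T\circ S, a]$ hereda orden menor o igual a $n+m-1$, que es lo que se quer\'ia.

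Para el caso base $n+m=0$ se tiene $n=m=0$, as\'i que $T$ y $S$ son $A$-lineales; entonces $[T,a]=[S,a]=0$ y la identidad da $[T\circ S, a]=0$, es decir $T\circ S$ resulta $A$-lineal y de orden cero. El \'unico punto que requiere cuidado es la contabilidad de los \'indices: la inducci\'on es sobre la suma $n+m$ y no sobre cada \'indice por separado, pues en cada sumando uno de los dos \'ordenes baja en una unidad mientras el otro se conserva; conviene adoptar el convenio de que el operador nulo tiene orden menor o igual a $-1$ para que el paso inductivo sea uniforme cuando $n=0$ o $m=0$, casos en los que el conmutador correspondiente simplemente se anula.
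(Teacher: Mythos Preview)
Tu demostraci\'on es correcta y sigue esencialmente el mismo camino que la del art\'iculo: ambos se apoyan en la identidad de Leibniz $[T\circ S,a]=T\circ[S,a]+[T,a]\circ S$ y proceden por inducci\'on sobre $n+m$. La \'unica diferencia es que t\'u detallas la verificaci\'on de la identidad, el caso base y el convenio sobre el orden del operador nulo, mientras que el art\'iculo deja esos puntos impl\'icitos.
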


\begin{proof} Para cada $a\in A$ se tiene que $$[T\circ S,a]=T\circ[S,a]+[T,a]\circ S.$$ Nótese que $[S,a]$ es de orden menor o igual a $m-1$ y $[T,a]$ es de orden menor o igual que $n-1$, luego el resultado se sigue por inducción en el grado de la composición.
\end{proof}

 Bajo composición $\circ$ se tiene que $\text{Diff}_{\mathbb{C}}(A)$ es una subálgebra asociativa de $\text{End}_{\mathbb{C}}(A)$. Adicionalmente, $\text{Diff}_{\mathbb{C}}(A)$ tiene estructura de $\mathbb{Z}$-\'algebra. Más aún éste es un anillo filtrado no conmutativo, cuya filtración viene dada por el orden de los operadores diferenciales. Explícitamente, la filtración por el orden está definida así
\[
F_{n}\text{Diff}_{\mathbb{C}}(A)=\{0\},\;\text{si}\,n<0.
\]
\[
F_{n}\text{Diff}_{\mathbb{C}}(A)=\{T\in\text{Diff}_{\mathbb{C}}(A)|\,\text{orden de}\, T\leq n\}.
\]
La filtración por el orden es creciente, i.e., $F_{n}\text{Diff}_{\mathbb{C}}(A)\subset F_{n+1}\text{Diff}_{\mathbb{C}}(A)$. Por otro lado, se tiene que
la filtración por el orden es compatible con la estructura de álgebra de $\text{Diff}_{\mathbb{C}}(A)$, es decir, para todo par de enteros $n,m$
$$F_{n}\text{Diff}_{\mathbb{C}}(A)\circ F_{m}\text{Diff}_{\mathbb{C}}(A)\subset F_{n+m}\text{Diff}_{\mathbb{C}}(A).$$

\begin{lem}\label{propiedades-filtracion} La filtración por el orden cumple las siguientes propiedades
\begin{enumerate}
    \item $F_{0}\text{Diff}_{\mathbb{C}}(A) = A$.
    \item $F_{1}\text{Diff}_{\mathbb{C}}(A) = A\oplus\text{Der}_{\mathbb{C}}(A)$.
    \item $[F_{n}\text{Diff}_{\mathbb{C}}(A),F_{m}\text{Diff}_{\mathbb{C}}(A)]\subset F_{n+m-1}\text{Diff}_{\mathbb{C}}(A)$.
\end{enumerate}
\end{lem}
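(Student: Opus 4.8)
The plan is to prove the three statements in turn, with the first two being essentially direct computations and the third an induction built on the Jacobi identity. For part (1), recall that $T\in F_0\text{Diff}_{\mathbb{C}}(A)$ means $\operatorname{ord}(T)\leq 0$, i.e. $[T,a_0]=0$ for every $a_0\in A$, which is exactly the statement that $T$ is $A$-lineal. Any $A$-lineal endomorphism satisfies $T(a)=T(a\cdot 1)=aT(1)$, so $T$ is multiplication by the fixed element $T(1)\in A$; conversely every multiplication operator $\hat a$ is $A$-lineal and hence has order $\leq 0$. Through the isomorphism $A\cong\text{End}_A(A)$ already established in the excerpt, this gives $F_0\text{Diff}_{\mathbb{C}}(A)=A$.

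For part (2) I would first check the inclusion $A\oplus\text{Der}_{\mathbb{C}}(A)\subseteq F_1\text{Diff}_{\mathbb{C}}(A)$. Elements of $A$ lie in $F_0\subseteq F_1$. For a derivation $D$, a short computation shows $[D,a](b)=D(ab)-aD(b)=D(a)\,b$, so $[D,a]$ is multiplication by $D(a)\in A$, hence of order $0$; therefore $[[D,a],a']=0$ and $D\in F_1$. The sum is direct because a derivation satisfies $D(1)=0$ while $\hat a(1)=a$, so $A\cap\text{Der}_{\mathbb{C}}(A)=0$. For the reverse inclusion, given $T\in F_1$, set $c=T(1)$ and $D:=T-\hat c$. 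Since $[T,a]$ has order $\leq 0$ it equals multiplication by $[T,a](1)=T(a)-ca$, which yields $T(ab)=aT(b)+bT(a)-cab$; substituting this into $D(ab)=T(ab)-cab$ and comparing with $aD(b)+bD(a)$ shows the two agree, so $D$ is a derivation and $T=\hat c+D\in A\oplus\text{Der}_{\mathbb{C}}(A)$.

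For part (3) the strategy is induction on $n+m$. The base cases $n=0$ or $m=0$ follow at once from the preceding remark: if $T\in F_0=A$ then $[S,T]=[S,a]\in F_{m-1}$ for $S\in F_m$, and antisymmetry gives $[T,S]\in F_{m-1}=F_{n+m-1}$ (the degenerate case $n=m=0$ being $[A,A]=0\subseteq F_{-1}$, since the commutator vanishes on $\text{End}_A(A)$). For the inductive step, fix $T\in F_n$, $S\in F_m$ and $a\in A$, and rewrite the Jacobi identity in the adjoint-derivation form
\[
[[T,S],a]=[[T,a],S]+[T,[S,a]].
\]
Here $[T,a]\in F_{n-1}$ and $[S,a]\in F_{m-1}$, so the two summands lie in $[F_{n-1},F_m]$ and $[F_n,F_{m-1}]$ respectively; by the induction hypothesis both are contained in $F_{n+m-2}$. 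Hence $[[T,S],a]\in F_{n+m-2}$ for every $a\in A$, which by the converse of the order remark means $[T,S]\in F_{n+m-1}$.

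I expect the main obstacle to be part (3): one must get the sign and index bookkeeping exactly right when recasting the Jacobi identity as the derivation rule for $[-,a]$, and one must justify the converse direction of the remark, namely that $[U,a]\in F_{k-1}$ for all $a$ forces $U\in F_k$, so that the conclusion $[[T,S],a]\in F_{n+m-2}$ upgrades to $[T,S]\in F_{n+m-1}$. Parts (1) and (2) are routine once the identification $A\cong\text{End}_A(A)$ and the formula for $[D,a]$ are in hand.
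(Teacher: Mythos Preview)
Your proposal is correct and follows essentially the same route as the paper: part (1) via the identification $A\cong\text{End}_A(A)$, part (2) by computing $[D,a](b)=D(a)b$ and then writing an arbitrary $S\in F_1$ as $S(1)$ plus a derivation, and part (3) by induction on $n+m$ using the Jacobi identity in the form $[[T,S],a]=[[T,a],S]+[T,[S,a]]$. Your write-up is simply more explicit about the base case and about the converse step ``$[U,a]\in F_{k-1}$ for all $a$ implies $U\in F_k$'', which the paper leaves implicit since it is immediate from the definition of the order filtration.
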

\begin{proof} 
\begin{enumerate}
    \item Se sigue de $\text{End}_{A}(A)\cong A$.
    \item $\text{Der}_{\mb{C}}(A)\subset F_{1}\text{Diff}_{\mathbb{C}}(A)$ ya que para $T\in\text{Der}_{\mb{C}}(A)$ y $a,b\in A$ se sigue que $$[T,a](b)=T(a(b))-a(T(b))=T(a)b,$$ es decir el conmutador es igual a multiplicar por un elemento de $A$. Nótese que toda $T\in\text{Der}_{\mb{C}}(A)$ cumple que $T(1)=0$, de lo cual se tiene que $T(a)=[T,a](1)$. Ahora bien, si $S\in F_{1}\text{Diff}_{\mathbb{C}}(A)$ defina $T=S-S(1)$ y se sigue que $T(a)=[T,a](1)$ y $T(ab)=[T,ab](1)=bT(a)+aT(b)$. Luego $S=T+S(1)$ donde $T$ es una derivación y $S(1)\in A$. La suma es directa ya que $Der_{\mb{C}}(A)\cap A = \{0\}$.
    \item Por inducción en $n+m$. En el caso base no hay nada que demostrar. El paso inductivo se sigue de la siguiente igual (identidad de Jacobi) 
    $$[[T,S],a]=[[T,a],S]+[T,[S,a]].$$
\end{enumerate}
\end{proof}

 El anillo graduado asociado a la filtración del orden en $\text{Diff}_{\mathbb{C}}(A)$ está dado por 
$$\text{Gr}\text{Diff}_{\mathbb{C}}(A)=\bigoplus_{n\in\mathbb{Z}}\left(F_{n}\text{Diff}_{\mathbb{C}}(A)/F_{n-1}\text{Diff}_{\mathbb{C}}(A)\right)=\bigoplus_{n\in\mathbb{Z}}\text{Gr}_n(\text{Diff}_{\mathbb{C}}(A)).$$

 De la condición $(3)$ del Lema \ref{propiedades-filtracion} se tiene que $\text{Gr}\text{Diff}_{\mathbb{C}}(A)$ es un $A$-\'algebra conmutativa.

\begin{ex}\label{baseder}
Considere $A=\mb{C}[X_1, \ldots, X_n]$ el anillo de polinomios en $n$ variables con coeficientes en $\mb{C}$. Se denota por $D(n)=\text{Diff}_{\mathbb{C}}(\mb{C}[X_1, \ldots, X_n])$ el anillo de operadores diferenciales. Def\'inase los símbolos $\partial_1,\ldots,\partial_n$ mediante la regla $\partial_i(X_j)=\delta_{ij}$ los cuales se extienden a $D(n)$ usando la regla de Leibniz. Se puede mostrar que el conjunto $(\partial_i)_{i=1}^{n}$ forma una base para $\text{Der}_{\mb{C}}(\mb{C}[X_1,\ldots,X_n])$ como $\mb{C}[X_1,\ldots,X_n]$-m\'odulo.

 Ahora bien, considere dos conjuntos de índices finitos $I=\{i_1,\ldots,i_n\}$ y $J=\{j_1,\ldots,j_n\}$,
$$X^{I}=X_1^{i_1}\dots X_n^{i_n},\;\;\partial^{J}=\partial_1^{j_1}\dots \partial_n^{j_n},$$ denotan los productos finitos entre los operadores $X_i$ y $\partial_j$, $1\leq i,j\leq n$. Por definici\'on se tiene que $X^{I}\partial^{J}\in D(n)$ y se puede mostrar que $(X^{I}\partial^{J})_{I,J}$ forman una base como espacio vectorial sobre $\mb{C}$. Es claro que el orden  de $X^{I}\partial^{J}$ es menor o igual a $|J|=\sum_{l=1}^{n}j_l$. En general, para $T=\sum_{|I|\leq p}P_{I}(X_1,\ldots,X_n)\partial^{I}$ donde $P_{I}(X_1,\ldots,X_n)\in\mb{C}[X_1, \ldots, X_n]$, su orden es menor o igual a $p$. 

\end{ex}

\subsection{El s\'imbolo de un operador diferencial}\label{simbolo} Sean $n\geq 1$, $T\in F_n\text{Diff}_{\mathbb{C}}(A)$ y $a_1,...,a_n\in A$. Considere la aplicación $\sigma_n(T):A^{n}\to \text{Diff}_{\mathbb{C}}(A)$ definida por $$\sigma_n(T)(a_1,...,a_n)=[...[[T,a_1],a_2],...,a_n].$$ Como el orden de $T$ es menor o igual que $n$, la aplicación $\sigma_n(T)$ toma valores en $F_{0}\text{Diff}_{\mathbb{C}}(A)\cong A$. 

\begin{lem}
La aplicación $\sigma_n(T)$ es $\mathbb{C}$-multilineal simétrica.
\end{lem}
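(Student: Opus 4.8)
The plan is to prove two things about $\sigma_n(T)$: that it is $\mathbb{C}$-multilinear in each argument, and that it is symmetric under permutation of its arguments. Both properties are established modulo the lower-order part of the filtration, which is exactly where the commutator identities from Lemma \ref{propiedades-filtracion} come into play.

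First I would address multilinearity. Additivity in each slot is immediate from the bilinearity of the commutator $[\,,\,]$, so the content is $\mathbb{C}$-homogeneity. Fix an argument $a_i$ and consider what happens when it is scaled or, more delicately, when the whole expression is viewed as a map into $F_0\text{Diff}_{\mathbb{C}}(A)\cong A$. The iterated commutator $[\cdots[[T,a_1],a_2],\ldots,a_n]$ lands in $F_0\text{Diff}_{\mathbb{C}}(A)$ because each commutator with an element of $A$ drops the order by one (by the first \emph{observación} after the definition in Section \ref{defop}), and $T$ has order at most $n$. Since elements of $A$ commute with one another inside $\text{End}_{\mathbb{C}}(A)$ (the commutator vanishes on $\text{End}_A(A)\cong A$), multiplying a single $a_i$ by a scalar $\lambda\in\mathbb{C}$ pulls straight through the commutator bracket, giving $\sigma_n(T)(\ldots,\lambda a_i,\ldots)=\lambda\,\sigma_n(T)(\ldots,a_i,\ldots)$.

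The main obstacle is symmetry, and here the key step is to show that the iterated bracket is symmetric in any two \emph{adjacent} arguments, from which symmetry under the full symmetric group $S_n$ follows since adjacent transpositions generate $S_n$. For adjacent $a_i,a_{i+1}$ the relevant fact is that swapping them changes the expression only by a term built from the double commutator $[[\,\cdot\,,a_i],a_{i+1}]-[[\,\cdot\,,a_{i+1}],a_i]=[\,\cdot\,,[a_i,a_{i+1}]]$, using the Jacobi identity. But $a_i,a_{i+1}\in A$ commute, so $[a_i,a_{i+1}]=0$ and the difference vanishes. Concretely, writing $S$ for the inner iterated commutator already formed, I would expand $[[S,a_i],a_{i+1}]-[[S,a_{i+1}],a_i]$ via Jacobi as $[S,[a_i,a_{i+1}]]=0$; applying this at the innermost relevant position and then propagating outward through the remaining brackets shows the two orderings agree, so $\sigma_n(T)$ is unchanged under transposing adjacent entries.

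Finally I would assemble these pieces: additivity and scalar-homogeneity in each slot give $\mathbb{C}$-multilinearity, and invariance under adjacent transpositions gives full symmetry. It is worth noting that, unlike the order-reduction arguments, no induction on $n$ is strictly needed for symmetry once the Jacobi-plus-commutativity identity $[S,[a_i,a_{i+1}]]=0$ is in hand; the only subtlety is bookkeeping the nested brackets, which I would handle by isolating the innermost two-bracket subexpression and treating everything outside it as a fixed operator to which the same identity applies.
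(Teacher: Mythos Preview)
Your proposal is correct and follows essentially the same approach as the paper: both establish symmetry by reducing to the identity $[[S,a],b]=[[S,b],a]$ for adjacent arguments (which you justify via Jacobi and $[a,b]=0$), and then use that adjacent transpositions generate $S_n$. Your treatment is in fact more complete than the paper's, since you also spell out the multilinearity, which the paper's proof leaves implicit.
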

\begin{proof}
Nótese que para todo $S\in\text{Diff}_{\mathbb{C}}(A)$ y $a,b\in A$ se tiene que $$[[S,a],b]=[[S,b],a],$$ de lo cual se sigue que
$$\sigma_n(T)(a_1,\ldots,a_i,a_{i+1},\ldots,a_n)=\sigma_n(T)(a_1,\ldots,a_{i+1},a_{i},\ldots,a_n),$$ donde $(a_1,\ldots,a_n)\in A^n$.
\end{proof}

\begin{obs}\label{simbolo y orden}
El orden de $T$ es menor o igual a $n-1$ $\iff$ $\sigma_n(T)=0$.
\end{obs}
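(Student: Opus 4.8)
The plan is to prove this observation simply by unwinding the two definitions involved, because on inspection the vanishing of the symbol $\sigma_n(T)$ and the condition $ord(T)\leq n-1$ are literally the same statement about an $n$-fold iterated commutator, up to a relabelling of the arguments. Throughout I assume, as the construction of $\sigma_n$ requires, that $T\in F_n\text{Diff}_{\mathbb{C}}(A)$.

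First I would write out both sides explicitly. By definition the symbol is the $\mathbb{C}$-multilinear map
$$\sigma_n(T)(a_1,\ldots,a_n)=[\ldots[[T,a_1],a_2],\ldots,a_n],$$
so that $\sigma_n(T)=0$ means precisely that this $n$-fold commutator vanishes for every $(a_1,\ldots,a_n)\in A^n$. On the other hand, the definition of order in Subsection \ref{defop} states that $T$ has order $\leq p$ exactly when $[[\ldots[[T,a_0],a_1],\ldots],a_p]=0$ for all $a_0,\ldots,a_p\in A$. Taking $p=n-1$ here produces the vanishing of an $n$-fold commutator $[[\ldots[[T,a_0],a_1],\ldots],a_{n-1}]$ for all $a_0,\ldots,a_{n-1}\in A$, with the same number of arguments and the same number of brackets as in $\sigma_n(T)$.

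Then I would conclude both implications at once. For the forward direction, assuming $ord(T)\leq n-1$, the order-$(n-1)$ condition says the $n$-fold commutator vanishes for all arguments; renaming $a_0,\ldots,a_{n-1}$ as $a_1,\ldots,a_n$ gives $\sigma_n(T)(a_1,\ldots,a_n)=0$ for every $n$-tuple, i.e. $\sigma_n(T)=0$. For the converse, $\sigma_n(T)=0$ is by definition the vanishing of the same $n$-fold commutator for all arguments, which is exactly the defining condition for $ord(T)\leq n-1$. Since the argument is a direct matching of definitions, there is no genuine obstacle; the only point demanding care is the bookkeeping of indices --- confirming that the $n$ entries $a_1,\ldots,a_n$ of $\sigma_n$ correspond to the $p+1=n$ elements $a_0,\ldots,a_{n-1}$ appearing in the order-$(n-1)$ condition --- so that the ``shift by one'' between the subscript $n$ of the symbol and the order $n-1$ is accounted for correctly.
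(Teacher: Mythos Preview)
Your proposal is correct and matches the paper's treatment: the paper states this as an \emph{Observaci\'on} with no proof, precisely because it is immediate from the definitions of $\sigma_n(T)$ and of the order of a differential operator. Your careful unwinding of the index shift between the $n$ arguments $a_1,\ldots,a_n$ of $\sigma_n$ and the $n$ arguments $a_0,\ldots,a_{n-1}$ in the order-$(n-1)$ condition is exactly the verification the reader is expected to supply.
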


 Para el caso $A=\mb{C}[X_1,\ldots,X_n]$, el anillo graduado asociado $GrD(n)$ tiene una descripción explícita en t\'erminos de generadores y relaciones. Para presentar tal descripción, se construirá un isomorfismo de $\mb{C}$-\'algebras denotado por $\text{Sym}$. La función $\text{Sym}$ viene inducida por un polinomio $\text{Sym}_p(\,)$ que se define a continuaci\'on. \\

 Sea $T\in D(n)$ un operador diferencial de orden menor o igual a $p$.  
\begin{itemize}
    \item Si $p<0$, entonces $T=0$ y $\text{Sym}_{p}(T)=0$.
    \item Si $p=0$, entonces $T\in\mb{C}[X_1,\ldots,X_n]$ y  $\text{Sym}_0(T)=T$.
    \item Si $p\geq 1$, entonces $\text{Sym}_p(T)$ se construye de la siguiente forma. Para cada $n$-tupla $(\xi_1,\ldots,\xi_n)\in\mb{C}^{n}$, sea $l_{\xi}=\sum_{i=1}^{n}\xi_i X_i\in\mb{C}[X_1,\ldots,X_n]$. Ahora bien, considere la aplicaci\'on $\mathbb{C}^n\to\mathbb{C}[X_1,\ldots,X_n]=F_0D(n)$ dada por,
    $$(\xi_1,\ldots,\xi_n)\mapsto \frac{1}{p!}\sigma_{p}(T)(l_{\xi},\ldots,l_{\xi}),$$ 
    esta se corresponde a un polinomio en $\mb{C}[X_1,\ldots,X_n,\xi_1,\ldots,\xi_n]$. Definimos $\text{Sym}_p(T)$ como el polinomio asociado a la aplicación anterior. 
\end{itemize}

 El polinomio $\text{Sym}_p(T)$ se denomina el \emph{$p$-s\'imbolo} de $T$. 

\begin{ex}{\label{Ejemplo-Sobrey}} Expl\'icitamente, para orden menor igual a $p$ se tiene que,
\begin{enumerate} 
    \item $\text{Sym}_0(X_i)=X_i$.
    \item $\text{Sym}_{1}(\partial_i)=\xi_i$, ya que para $f\in\mb{C}[X_1,\ldots,X_n]$, se tiene  
    \begin{align*}
        [\partial_i,\xi_j X_j](f) & = \xi_j(\partial_iX_j-X_j\partial_i)(f)\\
        & = \xi_j(\partial_i(X_jf)-X_i\partial_j(f))=\xi_j\delta_{ij}f.
    \end{align*}
    \item De los ejemplos anteriores se tiene que si $T=\sum_{|I|\leq p}P_I(X)\partial^{I}$, entonces
    \[
    \text{Sym}_p(T)=\sum_{|I|\leq p}P_I(X)\xi^{I}.
    \]
\end{enumerate}
\end{ex}

\begin{lem}\label{factoriza}
Si $\text{ord}(T)<p$ entonces $\text{Sym}_p(T)=0$
\end{lem}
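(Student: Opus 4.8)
The plan is to read off this statement almost directly from Observación \ref{simbolo y orden}, which already characterizes the vanishing of the full symbol $\sigma_p(T)$ in terms of the order of $T$. The only work is to connect that characterization with the definition of the polynomial $\text{Sym}_p(T)$ and to check the degenerate low-order cases separately.

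First I would dispose of the cases $p\leq 0$ by unwinding the definition of $\text{Sym}_p$. If $p\leq 0$, the hypothesis $\text{ord}(T)<p\leq 0$ forces $T$ to have negative order, hence $T=0$, and then $\text{Sym}_p(T)=0$ holds by the very definition of the $p$-symbol. So I may assume $p\geq 1$. In that range, recall that $\text{Sym}_p(T)$ is by definition the element of $\mathbb{C}[X_1,\ldots,X_n,\xi_1,\ldots,\xi_n]$ associated to the map
\[
(\xi_1,\ldots,\xi_n)\mapsto \frac{1}{p!}\,\sigma_p(T)(l_\xi,\ldots,l_\xi),\qquad l_\xi=\sum_{i=1}^{n}\xi_i X_i .
\]
The hypothesis $\text{ord}(T)<p$ is exactly $\text{ord}(T)\leq p-1$. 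By Observación \ref{simbolo y orden} (applied with $n$ replaced by $p$), this is equivalent to the vanishing of the full symbol, namely $\sigma_p(T)=0$ as a $\mathbb{C}$-multilinear symmetric map $A^p\to A$.

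Once $\sigma_p(T)=0$, the conclusion is immediate: evaluating the zero map on the diagonal gives $\sigma_p(T)(l_\xi,\ldots,l_\xi)=0$ for every $(\xi_1,\ldots,\xi_n)\in\mathbb{C}^n$, so the map defining $\text{Sym}_p(T)$ is identically zero and its associated polynomial is the zero polynomial, i.e. $\text{Sym}_p(T)=0$. I do not expect any genuine obstacle here, since the entire content is packaged into Observación \ref{simbolo y orden}; the only things to keep straight are the bookkeeping that the strict inequality $\text{ord}(T)<p$ means $\text{ord}(T)\leq p-1$, and the trivial remark that the diagonal evaluation $(l_\xi,\ldots,l_\xi)$ of an identically vanishing symmetric multilinear map is again zero.
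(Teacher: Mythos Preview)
Your argument is correct and follows exactly the paper's own route: the paper's proof is the single line ``Se sigue de la Observaci\'on \ref{simbolo y orden}'', and you have simply unpacked that observation together with the definition of $\text{Sym}_p$. The extra case analysis for $p\leq 0$ is harmless and consistent with how $\text{Sym}_p$ was defined.
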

\begin{proof}
Se sigue de la Observación \ref{simbolo y orden}.
\end{proof}

 Por el Lema \ref{factoriza} la aplicación $\text{Sym}_p$ factoriza por el anillo asociado graduado a la filtración dada por el orden,
$$\text{Sym}_p:\text{Gr}_p D(n)\to \mb{C}[X_1,\ldots,X_i,\xi_1,\ldots, \xi_n],$$ la cual induce la aplicación
$$\text{Sym}:\text{Gr} D(n)\to \mb{C}[X_1,\ldots,X_i,\xi_1,\ldots, \xi_n].$$

\begin{lem}\label{Sym-map-algebras}
Sean $T$ y $S$ dos operadores diferenciales de orden $p$ y $q$ respectivamente. Entonces $$\text{Sym}_{p+q}(T\circ S)=\text{Sym}_p(T)\text{Sym}_q(S)$$
\end{lem}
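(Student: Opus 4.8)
The plan is to evaluate both sides on the family of linear forms $l_\xi=\sum_{i=1}^n \xi_iX_i$ and compare the resulting polynomials in $\xi$. The crucial observation is that for a fixed $a\in A$ the map $\ad_a\colon \text{Diff}_{\mathbb{C}}(A)\to \text{Diff}_{\mathbb{C}}(A)$ given by $\ad_a(X)=[X,a]$ is a derivation of the associative algebra $(\text{Diff}_{\mathbb{C}}(A),\circ)$: the identity
\[
[X\circ Y,a]=X\circ[Y,a]+[X,a]\circ Y,
\]
already used in the proof of the composition lemma, is exactly the Leibniz rule $\ad_a(X\circ Y)=X\circ\ad_a(Y)+\ad_a(X)\circ Y$.

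First I would record the iterated Leibniz rule for a derivation, applied to $a=l_\xi$,
\[
\ad_{l_\xi}^{\,p+q}(T\circ S)=\sum_{k=0}^{p+q}\binom{p+q}{k}\,\ad_{l_\xi}^{\,k}(T)\circ\ad_{l_\xi}^{\,p+q-k}(S),
\]
which follows by an immediate induction on $p+q$ from the single Leibniz identity above. Unwinding the definition of the symbol map gives $\ad_{l_\xi}^{\,m}(T)=\sigma_m(T)(l_\xi,\ldots,l_\xi)$ (with $m$ arguments), so the left-hand side is precisely $\sigma_{p+q}(T\circ S)(l_\xi,\ldots,l_\xi)$.

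Next I would discard all but one term of the sum using the order bounds. Since $\operatorname{ord}(T)\le p$, the defining condition of the order (equivalently Observation \ref{simbolo y orden}) gives $\ad_{l_\xi}^{\,k}(T)=0$ for every $k>p$, and likewise $\ad_{l_\xi}^{\,j}(S)=0$ for $j>q$. A term of index $k$ can therefore be nonzero only when $k\le p$ and $p+q-k\le q$, which forces $k=p$. Hence
\[
\sigma_{p+q}(T\circ S)(l_\xi,\ldots,l_\xi)=\binom{p+q}{p}\,\sigma_p(T)(l_\xi,\ldots,l_\xi)\circ\sigma_q(S)(l_\xi,\ldots,l_\xi),
\]
and since both factors lie in $F_0\text{Diff}_{\mathbb{C}}(A)\cong A$, their composition is simply the product in $A$.

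Finally I would divide by $(p+q)!$ as prescribed in the definition of the symbol and simplify the constant via $\tfrac{1}{(p+q)!}\binom{p+q}{p}=\tfrac{1}{p!\,q!}$, obtaining
\[
\Sym_{p+q}(T\circ S)=\Big(\tfrac{1}{p!}\sigma_p(T)(l_\xi,\ldots,l_\xi)\Big)\Big(\tfrac{1}{q!}\sigma_q(S)(l_\xi,\ldots,l_\xi)\Big)=\Sym_p(T)\,\Sym_q(S)
\]
as an identity of polynomial-valued functions of $\xi$, hence as polynomials in $\mathbb{C}[X_1,\ldots,X_n,\xi_1,\ldots,\xi_n]$ because $\mathbb{C}$ is infinite. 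The only real subtlety is the bookkeeping in the iterated Leibniz rule together with the vanishing argument that collapses the sum to the single index $k=p$; once that is in place, the factorial arithmetic is routine.
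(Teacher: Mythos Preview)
Your argument is correct: the key input is that $\ad_{l_\xi}$ is a derivation of the associative algebra of differential operators, the iterated Leibniz (binomial) expansion follows by induction, and the order bounds kill every summand except $k=p$, after which the factorial bookkeeping is exactly as you wrote. The paper does not give its own proof of this lemma but simply refers to Mili\v{c}i\'c's notes \cite{DrM}; your computation is precisely the standard one carried out there, so there is nothing to contrast.
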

\begin{proof}
Ver pp. 21 de \cite{DrM}.
\end{proof}

 En otras palabras del Lema \ref{Sym-map-algebras} se sigue que $\text{Sym}$ es una aplicación de $\mb{C}$-álgebras. Adicionalmente, por el Ejemplo \ref{Ejemplo-Sobrey}, se tiene que $\text{Sym}$ es sobreyectiva.

\begin{lem}{\label{Sym-iny}}
$ \text{Sym}_{p}(T)=0\iff \text{ord}(T)\leq n-1.$
\end{lem}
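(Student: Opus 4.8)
I read the claim as the injectivity of $\mathrm{Sym}$ on each graded piece: $\mathrm{Sym}_p(T)=0 \iff \mathrm{ord}(T)\le p-1$ (the order bound being $p-1$, the threshold attached to $\mathrm{Sym}_p$, exactly as in Lemma~\ref{factoriza}). The implication $\mathrm{ord}(T)\le p-1 \Rightarrow \mathrm{Sym}_p(T)=0$ is precisely Lemma~\ref{factoriza}, so the only real work is the converse. The plan is to compute $\mathrm{Sym}_p(T)$ explicitly on the basis of Example~\ref{baseder} and read off the conclusion. First I would note that $\mathrm{Sym}_p$ is $\mathbb{C}$-linear: by construction it is built from $\frac{1}{p!}\sigma_p(T)(l_\xi,\ldots,l_\xi)$, and $\sigma_p(T)$ is $\mathbb{C}$-linear in $T$ since the commutator is linear in its first argument.

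Next I would write an arbitrary operator of order $\le p$ as $T=\sum_{|I|\le p} P_I(X)\,\partial^I$ with $P_I\in\mathbb{C}[X_1,\ldots,X_n]$, using that the $X^I\partial^J$ form a basis (Example~\ref{baseder}); here $\mathrm{ord}(T)=\max\{|I|:P_I\ne 0\}$. Applying $\mathrm{Sym}_p$ term by term, every summand with $|I|<p$ has order $<p$ and is annihilated by Lemma~\ref{factoriza}. For a summand with $|I|=p$, multiplicativity of $\mathrm{Sym}$ (Lemma~\ref{Sym-map-algebras}) together with $\mathrm{Sym}_1(\partial_i)=\xi_i$ (Example~\ref{Ejemplo-Sobrey}) yields $\mathrm{Sym}_p(P_I\partial^I)=\mathrm{Sym}_0(P_I)\,\mathrm{Sym}_p(\partial^I)=P_I\,\xi^I$. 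By linearity I obtain $\mathrm{Sym}_p(T)=\sum_{|I|=p}P_I(X)\,\xi^I$. Since the monomials $\{\xi^I:|I|=p\}$ are linearly independent over $\mathbb{C}[X_1,\ldots,X_n]$, the hypothesis $\mathrm{Sym}_p(T)=0$ forces every $P_I$ with $|I|=p$ to vanish, i.e. $\mathrm{ord}(T)\le p-1$.

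The one point demanding care is the identity $\mathrm{Sym}_p(P_I\partial^I)=P_I\,\xi^I$ for $|I|=p$: it uses that $\partial^I$ genuinely has order $|I|$ (so $\mathrm{Sym}_p$ does not kill it) and that multiplication by $P_I$, viewed as an order-$0$ operator, multiplies symbols—both supplied by Example~\ref{baseder} and Lemma~\ref{Sym-map-algebras}. I expect no genuine obstacle here. Should one prefer to avoid the explicit basis, an alternative is to deduce $\sigma_p(T)=0$ directly (whence $\mathrm{ord}(T)\le p-1$ by Observation~\ref{simbolo y orden}): polarize the vanishing diagonal $\sigma_p(T)(l_\xi,\ldots,l_\xi)=0$ to kill $\sigma_p(T)$ on the span of $X_1,\ldots,X_n$, and then propagate to all of $A$ by the Leibniz rule $\sigma_p(T)(bc,\ldots)=b\,\sigma_p(T)(c,\ldots)+c\,\sigma_p(T)(b,\ldots)$, which follows from $[S,bc]=[S,b]\,c+b\,[S,c]$ and induction on total degree.
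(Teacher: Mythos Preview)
Your argument is correct. The paper's own proof is essentially a pointer---``por inducci\'on en $p$; ver p.~22 de \cite{DrM}''---so there is little to compare against in detail, but your primary route is not an induction on $p$: you compute $\mathrm{Sym}_p$ directly on the monomial basis $P_I\,\partial^I$ via linearity, Lemma~\ref{factoriza}, and multiplicativity (Lemma~\ref{Sym-map-algebras}) together with Example~\ref{Ejemplo-Sobrey}, reducing the claim to the linear independence of the $\xi^I$ with $|I|=p$ over $\mathbb{C}[X_1,\ldots,X_n]$. This is clean and self-contained for $D(n)$ precisely because the explicit basis is available; the advantage of an inductive or polarization argument (your alternative sketch) is that it works uniformly without invoking that basis, hence generalizes more readily beyond the polynomial case. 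Either approach is adequate here.
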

\begin{proof}
Por inducción en $p$. Ver pp. 22 de \cite{DrM}. 
\end{proof}

 Del lema \ref{Sym-iny} se tiene que la aplicación $\text{Sym}$ es inyectiva.

\begin{teo}\label{anillo-graduado}
La aplicación $\text{Sym}$ es un isomorfismo de $\mb{C}$-\'algebras.
\end{teo}

\subsection{M\'odulos, soporte y variedad caracter\'istica}\label{sopycharvar} Debido a que $D(n)$ es un anillo no conmutativo, a priori hay que distinguir entre la categoría de $D(n)$-módulos a izquierda y a derecha. Para efectos de las definiciones que se darán en esta sección, se trabajará con $D(n)$-módulos a izquierda.\\

 Por un $D(n)$-módulo $M$ se entender\'a un grupo abeliano dotado de la estructura de $\mb{C}[X_1,\ldots,X_n]$-módulo en el cual los s\'imbolos $\partial_i$ act\'uan como derivaciones en $\text{End}_{\mb{C}}(M)$. En esta secci\'on se introducir\'a un tipo especial de $D(n)$-m\'odulos. Para esto se necesita recordar algunas nociones de teor\'ia de anillos. \\ 

Para cada $x\in\mb{C}^{n}$ sea $\mathfrak{m}_x=\{f\in\mb{C}[X_1,\ldots,X_n]|f(x)=0\}$ el ideal maximal de $\mb{C}[X_1,\ldots,X_n]$ que consiste de funciones que se anulan en $x$. La localización de $\mb{C}[X_1,\ldots,X_n]$ en $\mathfrak{m}_x$ se denotará por  $\mb{C}[X_1,\ldots,X_n]_x$. El ideal maximal de $\mb{C}[X_1,\ldots,X_n]_x$ será denotado por $\mathfrak{n}_x$ y ya que $\mb{C}[X_1,\ldots,X_n]_x$ es un anillo local regular tenemos que $\mathfrak{n}_x=(\mathfrak{m}_x)_x$.\\

 Consideremos a $M$ con su estructura de $\mb{C}[X_1,\ldots,X_n]$-módulo a izquierda. Para cada $x\in \mb{C}^n$ se denota por $M_x$ la localización de $M$ en $\mathfrak{m}_x$, es decir
$$M_x=M_{\mathfrak{m}_x}=(\mb{C}[X_1,\ldots,X_n]-\mathfrak{m}_x)^{-1}M\cong \mb{C}[X_1,\ldots,X_n]_{x}\otimes_{\mb{C}[X_1,\ldots,X_n]} M.$$
Luego,el soporte de $M$ est\'a dado por $$\text{Supp}(M)=\{x\in\mb{C}^n|M_x\neq 0\}.$$

 Para un $I$ un ideal de $\mb{C}[X_1, \ldots, X_n]$ denotaremos por $V(I)$ a la variedad de ceros del ideal $I$, es decir, $V(I):=\{ (x_1, \ldots, x_n) \in \mb{C}^n | f(x_1,\ldots, x_n)=0, \mbox{ para toda } f\in I\}$.\\ 

 Luego el soporte del $D(n)$-módulo $M$ está contenido en $\mb{C}^n$. Más aún, como $M$ es un $\mb{C}[X_1,\ldots,X_n]$-módulo se puede mostrar que $$\text{Supp}(M)=V(\text{Ann}_{\mb{C}[X_1,\ldots,X_n]}(M))$$ donde $\text{Ann}_{\mb{C}[X_1,\ldots,X_n]}M=\{f\in\mb{C}[X_1,\ldots,X_n]\,|\,fm=0,\,\forall m\in M\}$. En particular, si $M$ es finitamente generado, entonces $\text{Supp}(M)$ es un subconjunto cerrado de Zariski en $\mb{C}^n$.\\

 A cada $T\in D(n)$ la aplicación $Sym$, ver secci\'on \ref{simbolo}, produce un polinomio en $\mb{C}^{2n}$ denotado por $Sym(T)$. Si $M$ es $D(n)$-m\'odulo generado por un ideal $I=\langle T_1,\ldots, T_k\rangle \subset D(n)$, entonces tiene sentido considerar la variedad $V(\{Sym(T) | T\in I\})$, la cual llamamos variedad característica. Esta variedad busca describir el espacio de soluciones del sistema $T_1 u=\ldots=T_k u=0$ donde $u\in\mb{C}[X_1,\ldots, X_n]$. En muchos casos ocurre que la variedad característica coincide con $V(Sym(T_1), \ldots, Sym(T_k))$, pero puede ocurrir que ésta sea más pequeña. Los $D(n)$-módulos para lo que la variedad es de dimensión mínima son de particular interés, ver sección \ref{definiciones-dmodulos}. El estudio de estos módulos se encuentra en \cite{RTT} pp. 81. \\

 En general, para un $D(n)$-módulo $M$, definimos su variedad característica de la siguiente manera. Supongamos que $M$ admite una \emph{buena} filtración (ver pp. 10 de \cite{DrM} para la definición) $FM$ que es compatible con la filtración dada por el orden $FD(n)$ en $D(n)$. En particular, si $M$ es un $D(n)$-módulo finitamente generado, entonces existe una \emph{buena} filtración $FM$, (ver pp.10 de \cite{DrM}).\\

 El ideal característico de $M$ asociado a una \emph{buena} filtración $FM$  se define por el radical del anulador de $Gr(M)$ con respecto a $GrD(n)$.
$$J(M)=\sqrt{\text{Ann}_{\text{Gr(D(n))}}(Gr(M))}.$$
\begin{prop}
$J(M)$ no depende de la \emph{buena} filtración de $M$. \\

\begin{proof} Ver pp. 15 de \cite{B}. \end{proof}
\end{prop}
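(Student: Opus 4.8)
El plan es reducir la independencia de $J(M)$ a un \emph{lema de comparación} entre buenas filtraciones y luego traducir la pertenencia al radical del anulador en una condición puramente filtrada. Primero establecería el lema: si $FM$ y $F'M$ son dos buenas filtraciones de $M$ compatibles con $FD(n)$, entonces existe un entero $k\geq 0$ tal que $F_iM\subseteq F'_{i+k}M$ y $F'_iM\subseteq F_{i+k}M$ para todo $i$. Esto se sigue de la definición de buena filtración: los generadores de $Gr^{F}(M)$ y de $Gr^{F'}(M)$ viven en grados acotados, de modo que los generadores de una filtración se expresan mediante los de la otra usando elementos de $FD(n)$ de grado acotado, lo que produce un desfase uniforme $k$. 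Espero que la formulación cuidadosa de este lema (y la finitud/noetherianidad que lo sostiene) sea el principal obstáculo técnico.

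A continuación observaría que basta comparar los dos ideales sobre elementos homogéneos: como $Gr^{F}(M)$ es un módulo graduado sobre $GrD(n)$, su anulador es un ideal homogéneo y su radical $J(M)$ también lo es; por tanto dos de estos radicales coinciden si y sólo si contienen los mismos elementos homogéneos. Fijaría entonces $a\in GrD(n)$ homogéneo de grado $d$, representado por un $P\in F_dD(n)$ (de suerte que $a=\text{Sym}_d(P)$ y, por el Lema \ref{Sym-map-algebras}, $a^{N}=\text{Sym}_{Nd}(P^{N})$). Como $Gr^{F}(M)$ es finitamente generado, la pertenencia $a\in\sqrt{\text{Ann}_{GrD(n)}(Gr^{F}(M))}$ equivale a que $a^{N}$ anule a $Gr^{F}(M)$ para algún $N$, es decir, $P^{N}F_iM\subseteq F_{i+Nd-1}M$ para todo $i$; aquí el índice $Nd-1$ (y no $Nd$) refleja precisamente que $a^{N}$ actúa como cero sobre el graduado.

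El paso clave es propagar esta nilpotencia a la filtración $F'$ absorbiendo el desfase $k$ mediante potencias altas. Iterando la inclusión anterior se obtiene $P^{Nm}F_jM\subseteq F_{j+m(Nd-1)}M$, y combinando con el lema de comparación resulta
\[
P^{Nm}F'_iM\subseteq P^{Nm}F_{i+k}M\subseteq F_{i+k+m(Nd-1)}M\subseteq F'_{i+2k+m(Nd-1)}M.
\]
Como cada iteración aporta una ganancia de $1$ en el índice (el término $-m$ frente a $Nmd$), para $m\geq 2k+1$ se cumple $2k+m(Nd-1)\leq Nmd-1$, y por tanto $P^{Nm}F'_iM\subseteq F'_{i+Nmd-1}M$ para todo $i$. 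Esto dice exactamente que $a^{Nm}$ anula a $Gr^{F'}(M)$, luego $a\in\sqrt{\text{Ann}_{GrD(n)}(Gr^{F'}(M))}$. Por simetría en $F$ y $F'$ se concluye la igualdad de ambos radicales, es decir, $J(M)$ no depende de la buena filtración elegida. El corazón del argumento es esta observación de que el radical permite diluir el desfase fijo $2k$ a costa de iterar, algo que una igualdad exacta de anuladores (sin radical) no toleraría.
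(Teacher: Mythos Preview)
Your argument is correct and is precisely the standard proof: the comparison lemma between two good filtrations followed by the observation that a fixed shift $2k$ can be absorbed into the exponent because one is only asking for nilpotence (i.e., membership in the radical). The paper itself gives no argument beyond the citation ``Ver pp.~15 de \cite{B}'', and what you have written is essentially the proof found there, so there is nothing to compare.
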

 La \emph{variedad característica} de $M$ est\'a definida por la variedad cortada por el idela $J(M)$, 
$$\text{Ch}(M)=V(J(M)).$$
Si se cumple que $\text{dim}(\text{Ch}(M))=n$ diremos que $M$ es \emph{holonómico}.

\begin{ex}
$\mb{C}[X_1,\ldots,X_n]$ es un $D(n)$-m\'odulo holonómico.
\end{ex}

\begin{ex}
$M=D(n)/I$ donde $I=\langle T_1,\ldots, T_k\rangle$ ideal de $D(n)$, es un módulo holonómico. 
\end{ex}

\subsection{Álgebras de Lie}\label{Liesec}
Un \'algebra de Lie es un $\mb{C}$-espacio vectorial $\g$ dotado de una aplicaci\'on bilineal $[\;,\;]:\g\times \g\rightarrow \g$, llamada corchete de Lie, que cumple las siguientes condiciones

\begin{itemize}
\item Antisimetría $[X,Y] = -[Y,X]$, para toda $X,Y\in\g$.
\item Identidad de Jacobi $[X,[Y,Z]] = [[X,Y],Z] + [Y,[X,Z]]$ para todo $X,Y,Z\in\g$
\end{itemize}

\begin{ex}
\begin{itemize}

\item El espacio vectorial de las derivaciones $\mb{C}$-lineales, $Der_{\mb{C}}(A)$ para $A$ una $\mb{C}$-álgebra conmutativa, donde el corchete está dado por el conmutador $[\phi,\psi]=\phi\circ\psi - \psi\circ\phi$.
    \item Las matrices $n\times n$ con el corchete dado por el conmutador $[X,Y]=XY-YX$ para dos matrices $X,Y$. Este espacio se denota por $\mf{gl}_n(\mb{C})$ y es llamado \emph{álgebra lineal general}.
    \item El subespacio de $\mf{gl}_n(\mb{C})$ cuyas matrices tienen traza cero con el mismo corchete de $\mf{gl}_n(\mb{C})$ es un álgebra de Lie la cual se denota por $\mf{sl}_n(\mb{C})$ y es llamada \emph{álgebra lineal especial}. En el caso $n=2$, una base para $\mf{sl}_2(\mb{C})$ est\'a formada por las matrices $E=\begin{pmatrix} 0 & 1 \\ 0 & 0 \end{pmatrix}$, $F=\begin{pmatrix} 0 & 0 \\ 1 & 0 \end{pmatrix}$, $H=\begin{pmatrix} 1 & 0 \\ 0 & -1 \end{pmatrix}$. Las reglas de conmutación están dada por $[E,F]=H$, $[H,E]=2E$ and $[H,F]=-2F$.
    \item Sea $V$ un espacio vectorial sobre $\mb{C}$. El espacio vectorial de transformaciones lineales $V\to V$, denotado ${\mf{gl}(V)}$, tiene estructura de álgebra de Lie donde el corchete está dado por $[f,g]=f\circ g- g\circ f$. En particular, si $V$ es de dimensi\'on finita $n$, mediante un isomorfismo $V\cong \mb{C}^n$ se tiene que ${\mf{gl}(V)} \cong \mf{gl}_n(\mb{C})
    $, el cual es un isomorfismo que depende de $V\cong \mb{C}^n$.
\end{itemize}
\end{ex}

 Dadas dos \'algebras de Lie $(\mf{g}_1,[,]_1)$ y $(\mf{g}_2,[,]_2)$ un morfismo $\fc{\varphi}{\mf{g}_1}{\mf{g}_2}$ de álgebras de Lie es una aplicación lineal tal que $\phi([X,Y]_1)=[\phi(X),\phi(Y)]_2$ para todo $X,Y\in\mf{g}$. Se dice que $I$ es un \emph{ideal} de $\mf{g}$ si es un subespacio de $\mf{g}$ tal que $[\g,I]\subseteq I$, es decir, $I$ es cerrado bajo el corchete de $\mf{g}$. El kernel y la imagen de un morfismo de \'algebras de Lie son ejemplos de ideales. Adicionalmente, un \'algebra de Lie $\g$ es \emph{simple} si $[\mf{g},\mf{g}]\neq 0$  y  sus \'unicos ideales son $\mf{g}$ y $0$.\\

 Una representaci\'on de $\g$ es un morfismo de \'algebras de Lie $\fc{\rho}{\g}{\mf{gl}(V)}$. Se denota la acci\'on de $\g$ sobre $V$ como $X\cdot v := \rho(X)(v)$, para todo $X\in \g$ y $v\in V$. Note que el corchete de dos elementos $[X,Y]$ act\'ua sobre $V$ mediante la regla $[X,Y]\cdot v = X\cdot(Y\cdot v) - Y\cdot(X\cdot v)$. En este caso se dice que $V$ es un $\g$-m\'odulo o una representaci\'on de $\g$. Los morfismos entre $\g$-m\'odulos se definen de la manera natural. La categor\'ia de $\g$-m\'odulos se denotar\'a por $\g$-Mod.\\

 El \'algebra envolvente universal de $\g$, denotada por $U(\g)$, se define como:   
$$U(\g) = T(\g)/\langle X\otimes Y - Y\otimes X - [X, Y]\; | \; X,Y\in \g \rangle,$$ donde $T(\g)$ es el álgebra tensorial de $\g$. Es importante notar que $U(\g)$ es una $\mb{C}$-\'algebra asociativa con unidad, noetheriana y sin divisores de cero. La filtraci\'on del \'algebra tensorial $T(\g)$ induce una filtraci\'on en $U(\g)$ de tal forma que $Gr U(\g) \cong S(\g)$, donde $S(\g)$ es el \'algebra sim\'etrica de $\g$; este isomorfismo recibe el nombre de \emph{Teorema PBW} (\cite{H1}, Teorema 17.3). En particular el Teorema PBW implica que el conjunto de palabras de elementos de $\g$ forma un conjunto de generadores para $U(\g)$, (una base PBW).

\begin{ex}
Para $\mf{sl}_2(\mb{C})$, los elementos de la forma $F^iH^jE^k$ forman una base $PBW$, para todos $i,j,k\in \mb{N}$.
\end{ex}

\begin{obs} Cualquier $\g$-m\'odulo es un m\'odulo para $U(\g)$ al extender la acci\'on al producto tensorial. Rec\'iprocamente, todo m\'odulo para $U(\g)$ es un m\'odulo para $\g$ al considerar la acci\'on restricta a los elementos de $\g$. Por lo tanto $U(\g)$-Mod es equivalente a $\g$-Mod.\end{obs}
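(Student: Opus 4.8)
El objetivo es construir funtores mutuamente inversos entre $\g$-Mod y $U(\g)$-Mod, estableciendo as\'i la equivalencia (de hecho, un isomorfismo) de categor\'ias. El plan es apoyarse en la propiedad universal del \'algebra tensorial $T(\g)$ y en la descripci\'on de $U(\g)$ como cociente de $T(\g)$. Primero, dada una representaci\'on $\fc{\rho}{\g}{\mf{gl}(V)}$, notar\'ia que $\mf{gl}(V)=\End_{\mb{C}}(V)$ es un \'algebra asociativa y que $\rho$ es, en particular, una aplicaci\'on $\mb{C}$-lineal $\g\to\End_{\mb{C}}(V)$. Por la propiedad universal de $T(\g)$, esta se extiende de manera \'unica a un morfismo de \'algebras asociativas $\tilde\rho\colon T(\g)\to\End_{\mb{C}}(V)$.

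El paso clave ser\'ia verificar que $\tilde\rho$ se anula sobre el ideal que define a $U(\g)$. Como $\rho$ es morfismo de \'algebras de Lie, para $X,Y\in\g$ se tiene $\rho([X,Y])=[\rho(X),\rho(Y)]=\rho(X)\circ\rho(Y)-\rho(Y)\circ\rho(X)$, de modo que el generador $X\otimes Y-Y\otimes X-[X,Y]$ va a cero bajo $\tilde\rho$. Por la propiedad universal del cociente, $\tilde\rho$ factoriza a trav\'es de un morfismo de \'algebras $U(\g)\to\End_{\mb{C}}(V)$, es decir, una estructura de $U(\g)$-m\'odulo sobre $V$ que extiende la acci\'on de $\g$.

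Rec\'iprocamente, dado un $U(\g)$-m\'odulo $W$, esto es, un morfismo de \'algebras $U(\g)\to\End_{\mb{C}}(W)$, lo compondr\'ia con la aplicaci\'on can\'onica $\g\to T(\g)\to U(\g)$ para obtener una aplicaci\'on lineal $\g\to\End_{\mb{C}}(W)$. Que esta sea un morfismo de \'algebras de Lie se sigue de que en $U(\g)$ vale $[X,Y]=XY-YX$ (por las relaciones que definen el cociente): la imagen de $[X,Y]$ resulta ser $\rho(X)\rho(Y)-\rho(Y)\rho(X)=[\rho(X),\rho(Y)]$. As\'i, la restricci\'on dota a $W$ de estructura de $\g$-m\'odulo.

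Finalmente, comprobar\'ia la compatibilidad con los morfismos: una aplicaci\'on $\mb{C}$-lineal $\fc{f}{V}{V'}$ es $\g$-equivariante si y s\'olo si es $U(\g)$-equivariante. La implicaci\'on no trivial usa que $\g$ junto con la unidad genera $U(\g)$ como \'algebra (base PBW, mencionada arriba): si $f$ conmuta con la acci\'on de cada $X\in\g$ y con la de $1$, una inducci\'on sobre la longitud de las palabras $X_1\cdots X_k$ muestra que $f$ conmuta con la acci\'on de todo $u\in U(\g)$. Este es, a mi juicio, el \'unico punto que requiere cuidado, aunque no es profundo. Con ello las dos asignaciones resultan mutuamente inversas tanto en objetos como en morfismos, lo que produce la equivalencia $U(\g)$-Mod $\simeq$ $\g$-Mod anunciada.
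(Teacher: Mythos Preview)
Tu argumento es correcto y completo. Debes tener en cuenta, sin embargo, que el art\'iculo no ofrece demostraci\'on alguna de esta observaci\'on: se limita a enunciar el resultado indicando, en el propio enunciado, que la extensi\'on se hace ``al producto tensorial'' y que la vuelta se obtiene ``al considerar la acci\'on restricta a los elementos de $\g$''. Tu propuesta hace expl\'icito precisamente ese bosquejo: usas la propiedad universal de $T(\g)$ para extender $\rho$, verificas que el ideal de relaciones $X\otimes Y-Y\otimes X-[X,Y]$ cae en el n\'ucleo (aqu\'i es donde interviene que $\rho$ sea morfismo de \'algebras de Lie), y para los morfismos apelas a que $\g$ genera $U(\g)$ como \'algebra. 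Todo ello es est\'andar y no presenta lagunas; de hecho obtienes algo ligeramente m\'as fuerte que una equivalencia, a saber, un isomorfismo de categor\'ias, lo cual es correcto en este caso.
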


 Para un álgebra de Lie $\g$ la representación adjunta está dada por la aplicación $\ad : \g \rightarrow \mf{gl}(\g)$, donde para cada $X\in\g$, $\ad_X: \g \rightarrow \g$ es tal que $\ad_X(Y) = [X,Y]$ para toda $Y\in \g$. En. el caso particular de $\mf{sl}_2(\mb{C})$, la representaci\'on adjunta en los elementos b\'asicos viene dada por $\ad_E=\begin{pmatrix} 0 & -2 & 0 \\ 0 & 0 & 1\\ 0 & 0 & 0 \end{pmatrix}$, $\ad_F=\begin{pmatrix} 0 & 0 & 0 \\ -1 & 0 & 0\\ 0 & 2 & 0 \end{pmatrix}$, $\ad_H=\begin{pmatrix} 2 & 0 & 0 \\ 0 & 0 & 0\\ 0 & 0 & -2 \end{pmatrix}$.

\section{Haces y Variedades Algebraicas}

 En esta sección se presentan las definiciones b\'asicas de variedades algebraicas y haces definidos sobre estos espacios. Adicionalmente, se ejemplificarán en algunos casos las definiciones presentadas en esta sección. El contenido de esta sección está inspirado en \cite{H, KS, B, RTT}.

\subsection{Algunas observaciones en la teoría de haces sobre espacios} Sea $X$ un espacio topológico. Denote por $Op(X)$ la categoría de los conjuntos abiertos de $X$, i.e. la categoría cuyos objetos son los conjuntos abiertos de $X$ y para cualesquiera dos conjuntos abiertos $U$ y $V$ de $X$  existe un morfismo de $U$ a $V$ si y sólo si $U\subseteq V$. La categoría opuesta  de $Op(X)$ se denota $Op(X)^{o}$.\\

 Un prehaz\footnote{La palabra (pre)haz se utiliza como traducción de la palabra francesa {\it (pré)faisceaux} o de la palabra inglesa {\it (pre)sheaf}. Dependiendo de la escuela, en español se utiliza la palabra {\it (pre)gavilla} como sinónimo de (pre)haz.} $\mc{F}$ sobre $X$ con valores en una categoría $\mc{C}$ es un functor $\mc{F}:Op(X)^{o} \rightarrow \mc{C}$. Explícitamente para todo $U\subseteq X$ subconjunto abierto, $\mc{F}(U)$ es un objeto de $\mc{C}$. Para cada inclusión $\iota: U \hookrightarrow V$ existe un morfismo ``restricción'' $\mc{F}(\iota)= \rho^{V}_{U} : \mc{F}(V) \rightarrow \mc{F}(U)$. Más aún, para todo abierto $U$, $\rho^{U}_{U}=id_{\mc{F}(U)}$ y para cualquier tripleta de abiertos $U,V,W$ tales que $U\subseteq V \subseteq W$ se tiene que $\rho^{W}_{U} = \rho^{V}_{U}\circ\rho^{W}_{V}$. Si la categoría $\mc{C}$ tiene objeto cero $0$ entonces $\mc{F}(\varnothing) = 0$.\\

\begin{obs} En este artículo la categoría $\mc{C}$ corresponderá a alguna de las siguientes: la categoría de conjuntos {\bf Sets}, la categoría de grupos abelianos {\bf Ab}, la categoría de anillos {\bf Rings} o la categoría de $R$-módulos {\bf $R$-Mod} para un anillo $R$ dependiendo del contexto.\end{obs}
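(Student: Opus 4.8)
The statement in question is a purely notational convention rather than a mathematical assertion: it fixes, once and for all, that the target category $\mc{C}$ appearing in the definition of a (pre)haz will always be one of \textbf{Sets}, \textbf{Ab}, \textbf{Rings}, or \textbf{$R$-Mod}, chosen according to context. As such there is no theorem to be deduced and, strictly speaking, nothing to prove. My plan is therefore not to produce a derivation but to record the two implicit consistency checks that make the convention well posed, since these are the only points where a reader might expect verification.

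First, I would confirm that each of the four listed categories is indeed a legitimate target into which a functor $\mc{F}:Op(X)^{o}\to\mc{C}$ may take values; this is immediate, as \textbf{Sets}, \textbf{Ab}, \textbf{Rings}, and \textbf{$R$-Mod} are all standard categories, and the restriction maps $\rho^{V}_{U}$ are simply arrows in the chosen $\mc{C}$ satisfying the functoriality conditions $\rho^{U}_{U}=\text{id}$ and $\rho^{W}_{U}=\rho^{V}_{U}\circ\rho^{W}_{V}$ already imposed above. Second, and more substantively, I would check in which of these categories the preceding clause ``si $\mc{C}$ tiene objeto cero $0$ entonces $\mc{F}(\varnothing)=0$'' is actually operative. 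Here one observes directly that \textbf{Ab} and \textbf{$R$-Mod} each possess a zero object (the trivial group, respectively the zero module), so the vanishing clause applies; whereas \textbf{Sets} has no zero object, since its initial object $\varnothing$ differs from its terminal object $\{*\}$, and likewise \textbf{Rings} (with unit) has initial object $\mb{Z}$ distinct from its terminal object, the zero ring. For these latter two the clause simply imposes no condition.

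Because the convention is stated to hold ``dependiendo del contexto'', the only genuine obligation is internal consistency in the sequel: whenever a later construction invokes $\mc{F}(\varnothing)=0$, the ambient $\mc{C}$ must be understood to be \textbf{Ab} or \textbf{$R$-Mod} rather than \textbf{Sets} or \textbf{Rings}. The main, and indeed only, point to be careful about is precisely this matching of the zero-object hypothesis to the category actually in use; beyond this bookkeeping there is no obstacle, as no nontrivial mathematical claim is being asserted by the observation.
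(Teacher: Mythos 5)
Your reading is correct: the paper treats this \emph{observaci\'on} as a pure convention and offers no proof, exactly as you conclude, so your proposal matches the paper's (implicit) approach. Your supplementary consistency checks are accurate as well --- \textbf{Ab} and \textbf{$R$-Mod} have zero objects while \textbf{Sets} (initial $\varnothing$ versus terminal singleton) and unital \textbf{Rings} (initial $\mathbb{Z}$ versus terminal zero ring) do not, so the clause $\mc{F}(\varnothing)=0$ is vacuous in the latter two cases --- though the paper itself does not spell this out.
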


 Para cualquier abierto $U\subseteq X$ los elementos de $\mc{F}(U)$ se llaman secciones locales de $\mc{F}$. Si $\iota: U\subseteq V$ es una inclusión entre dos conjuntos abiertos de $X$ y $s\in \mc{F}(V)$, la restricción $\rho^{V}_{U}(s)$ se denota por $s|_{U}$. Es usual denotar el conjunto de secciones locales $\mc{F}(U)$ por $\Gamma(U, \mc{F})$. El functor $\Gamma(U, \;\_)$ se llama functor de secciones locales. En el caso $U = X$, el functor $\Gamma(X; \;\_)$ es llamado functor de secciones globales de $X$. \\

 En adelante se trabajará bajo el supuesto que $\mc{C}$ es una categoría en la cual existen productos. Un prehaz $\mc{F}$ sobre $X$ con valores en $\mc{C}$ es un \emph{haz}, si para todo abierto $U$ de $X$ y cualquier cubrimiento abierto $\{U_i\}$ de $U$ el siguiente diagrama es un ecualizador en $\mc{C}$:

\[ 
\xymatrix{ \mc{F}(U) \ar[r] & \prod_{i} \mc{F}(U_i) \ar@<0.5ex>[r] \ar@<-0.5ex>[r] & \prod_{i, j} \mc{F}(U_i\cap U_j)  }  
\]

\vspace{0.5cm}

 En otras palabras, para todo abierto $U\subseteq X$ y todo cubrimiento $\{U_i\}$ de $U$ se tiene que $F$ cumple las siguientes propiedades

\begin{enumerate}
    \item Si $s,t\in \mc{F}(U)$ son tales que $s|_{U_i} = t|_{U_i}$ para toda $i$, entonces $s=t$.
    \item Si $s_i\in \mc{F}(U_i)$ es tal que $s_i|_{U_i\cap U_j} = s_j|_{U_i\cap U_j}$ entonces existe (única) $s\in \mc{F}(U)$ tal que $s|_{U_i}=s_i$
\end{enumerate}

\begin{obs} Si existe una sección  ``cero'' en $\mc{F}(U)$, por ejemplo $\mc{C}$ es {\bf Ab}, entonces la condición (1) dice que toda sección que es localmente la sección cero debe ser la sección cero. \end{obs}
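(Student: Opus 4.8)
The plan is to read this observation as nothing more than the separation axiom (1) of the sheaf definition, applied with the zero section playing the role of the second section $t$. First I would pin down the meaning of ``locally the zero section'': it is a section $s\in\mc{F}(U)$ for which there is an open cover $\{U_i\}$ of $U$ with $s|_{U_i}=0$ for every $i$, where $0$ denotes the zero section of $\mc{F}(U_i)$.

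The one point that has to be checked is that the global zero section of $\mc{F}(U)$ restricts to the local zero section on each $U_i$. Since here $\mc{C}=\mathbf{Ab}$, every restriction map $\rho^{U}_{U_i}\colon\mc{F}(U)\to\mc{F}(U_i)$ is a homomorphism of abelian groups and therefore sends the zero element to the zero element; hence $0|_{U_i}=\rho^{U}_{U_i}(0)=0$ for all $i$. Combining this with the hypothesis gives $s|_{U_i}=0=0|_{U_i}$ for every $i$, so $s$ and the zero section agree on the whole cover. Applying axiom (1) with $t=0$ then forces $s=0$.

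I do not expect any genuine obstacle: the whole content of the statement is carried by the separation axiom (1), which I am allowed to assume. The only substantive remark is the one just made, namely that restriction maps preserve the zero element because they are morphisms in $\mathbf{Ab}$ (more generally, whenever $\mc{C}$ has a zero object the zero morphisms exist and the distinguished zero section is stable under restriction). Everything else is a direct unwinding of the definitions, so the ``proof'' is essentially a reformulation of (1).
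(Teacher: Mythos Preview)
Your reading is correct: the paper states this as a bare remark with no proof, precisely because it is nothing more than axiom (1) specialized to $t=0$, and your write-up makes that explicit (including the one small check that restriction maps, being group homomorphisms, preserve $0$). There is nothing to add.
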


\begin{ex} Sea $X$ un espacio topológico, para todo abierto $U\subseteq X$ se define $C^{0}_{X}(U)=\{ f:U\rightarrow \mb{R} |\quad f \mbox{ es continua } \}$. Claramente $C^0_{X}$ es un prehaz y puesto que su naturaleza es puramente local se tiene que $C^{0}_{X}$ es un haz. Si $X$ es una variedad suave, se define $C^{\infty}_{X}(U)=\{ f:U\rightarrow \mb{R} |\quad f \mbox{ es suave } \}$, y se tiene que la asignación $C^{\infty}_{X}$ es un haz. Similarmente, para $X$ una variedad compleja, el haz de funciones holomorfas se define de manera similar y para $X$ una variedad algebraica (o esquema) se tiene el haz de funciones regulares. \end{ex}

\begin{ex} Sea $X$ un espacio topológico y sea $A$ un grupo abeliano con la topología discreta. Defina $\mc{A}(U) = \{ f:U\rightarrow A | \quad f \mbox{ es continua } \}$. Entonces $\mc{A}$ es un haz llamado haz localmente constante. Si $U$ es conexo, $\mc{A}(U)\cong A$, caso contrario es isomorfo una suma directa de copias de $A$. \end{ex}

 Para cualesquiera dos haces $\mc{F}$ y $\mc{G}$ sobre $X$ con valores en $\mc{C}$, un morfismo $\varphi: \mc{F} \rightarrow \mc{G}$ es una transformación natural entre los functores $\mc{F}$ y $\mc{G}$, i.e. para todo par de abiertos $U\subseteq V$ de $X$, $\varphi(U): \mc{F}(U) \rightarrow \mc{G}(U)$ es un morfismo en la categoría $\mc{C}$ para el cual el siguiente diagrama conmuta:

\[ \xymatrix{ \mc{F}(V) \ar[r]^{\varphi(V)} \ar[d]_{\rho^{V}_{U}} & \mc{G}(V) \ar[d]^{\rho^{V}_{U}}\\  \mc{F}(U) \ar[r]^{\varphi(U)} & \mc{G}(U). }  \]

\vspace{0.5cm}

 Se denotará la categoría de prehaces sobre $X$ con valores en $\mc{C}$ por $PSh(X, \mc{C})$; si la categoría $\mc{C}$ es clara en el contexto de trabajo, entonces sólo se escribirá $PSh(X)$, y se denotará su subcategoría plena de haces como $Sh(X)$. Cabe notar que si la categoría $\mc{C}$ es \emph{abeliana}, entonces la categoría $Sh(X)$ también lo es. En lo que sigue la categoría $\mc{C}$ se supondrá ser una categoría abeliana.\\

 Se sigue que todo haz es un prehaz, es decir, existe el functor de olvido $for : Sh(X) \rightarrow PSh(X)$. Sin embargo, no todo prehaz es un haz. Pero es sabido que el functor $for$ cuenta con un functor adjunto a izquierda $i^{+}:PSh(X) \rightarrow Sh(X)$ el cual a cada prehaz $\mc{F}$ le asocia un haz llamado ``sheafification'' $i^{+}\mc{F}$. Al ser functores adjuntos se tiene el siguiente isomorfismo de conjuntos $$ \Hom_{PSh(X)}(\mc{F}, for(\mc{G})) \cong \Hom_{Sh(X)}(i^{+}(\mc{F}), \mc{G}) $$ lo cual es equivalente a decir que para cualquier prehaz $\mc{F}$, y cualquier haz $\mc{G}$ y morfismo de prehaces $\psi:\mc{F} \rightarrow \mc{G} = for(\mc{G})$, (note que $\mc{G} = for(\mc{G})$ como prehaz), existe \'unico morfismo $\varphi: i^{+}(\mc{F}) \rightarrow \mc{G}$ tal que $\psi = \varphi \circ \eta_{\mc{F}} $, donde $\eta_{\mc{F}}: \mc{F}\rightarrow i^{+}(\mc{F})$ es el morfismo de adjunción. Por $\mc{F}^{+}$ se denota el haz $i^{+}(\mc{F})$ el cual admite una descripción explicita dada por

$$  \mc{F}^{+}(U) = \Bigg\{ s:U\rightarrow \bigcup_{x\in U}\mc{F}_x \quad |\quad s(x)\in \mc{F}_x \mbox{ y } \begin{cases} \forall x\in U, \exists V\ni x, V\subseteq U \mbox{ y }  t\in \mc{F}(V)\\ \mbox{ tal que } \forall y\in V \mbox{ se tiene que } t_y=s(y) \end{cases}\Bigg\} \quad  \Bigg\}.  $$

\vspace{1cm}

 Todo morfismo de haces con valores en {\bf Ab}  define los siguientes prehaces asociados a \'el. Suponga que $\varphi:\mc{F} \rightarrow \mc{G}$ es un morfismo de haces con valores en {\bf Ab}.

\begin{itemize}
    \item El kernel de $\varphi$: Para todo abierto $U\subseteq X$, $(\ker\varphi)(U)=\ker(\varphi(U))$.
    \item El cokernel de $\varphi$: Para todo abierto $U\subseteq X$, $(\cok\varphi)(U)=\cok(\varphi(U))$.
    \item La imagen de $\varphi$: Para dodo abierto $U\subseteq X$, $(\im\varphi)(U)=\im(\varphi(U))$.
\end{itemize}

\vspace{0.3cm}

 Se puede mostrar que el kernel de $\varphi$ es de hecho un haz, sin embargo, esto no ocurre con los prehaces imagen y cokernel. Por lo tanto los haces imagen y cokernel de $\varphi$ son las ``sheafificaciones'' de estos prehaces y se denotan $\im\varphi$ y $\cok\varphi$ respectivamente.\\

 Ya que uno de los objetivos de los haces es entender información local, es importante definir g\'ermenes de secciones locales. Sea $\mc{F}$ un haz sobre $X$, el ``stalk'' sobre $x\in X$ está dado por $$\mc{F}_x := \varinjlim_{U\ni x}\mc{F}(U)=\bigsqcup_{x\in U}F(U)/\sim.$$
Así, el stalk $\mc{F}_x$ tiene por elementos clases de equivalencia $[f,U]$ donde $U$ es un abierto y $f\in \mc{F}(U)$. Dos clases $[f,U]$ y $[g,V]$ son equivalentes si existe abierto $W\subseteq U\cap V$  tal que $f|_W = g|_W$. De lo cual se tiene que el germen de una sección $s\in \mc{F}(U)$ se define como la imagen en el stalk $s_x\in \mc{F}_x$. \\

 Dados dos haces $\mc{F}'$ y $\mc{F}$ sobre un espacio topológico $X$, $\mc{F}'$ es un \emph{subhaz} de $\mc{F}$ si para cualquier abierto $U$ de $X$, $\mc{F}'(U)$ es un subobjeto de $\mc{F}(U)$ y el morfismo restricción de $\mc{F}'$ se hereda de $\mc{F}$.\\

 Sean $\mc{F}$, $\mc{G}$ y $\mc{H}$ haces sobre $X$ y sean $\varphi: \mc{F}\rightarrow \mc{G}$ y $\psi:\mc{G}\rightarrow \mc{H}$ morfismos de haces, entonces:

\begin{itemize}
    \item $\varphi$ es un monomorfismo si y sólo si $\ker\varphi=0$, i.e.,  $\varphi_x$ es un monomorfismo en $\mc{C}$. 
    \item $\varphi$ es un epimorfismo si y sólo si $\im\varphi=\mc{G}$, i.e., $\varphi_x$ es un epimorfismo en $\mc{C}$.
    \item $\xymatrix{0\ar[r]&\mc{F}\ar[r]^{\varphi}&\mc{G}\ar[r]^{\psi}&\mc{H}\ar[r]&0}$ es una sucesión exacta corta de haces si y sólo si $\varphi$ es un monomorfismo, $\psi$ es un epimorfismo y $\ker\psi=\im\varphi$, i.e.,  $\xymatrix{0\ar[r]&\mc{F}_x\ar[r]^{\varphi_x}&\mc{G}_x\ar[r]^{\psi_x}&\mc{H}_x\ar[r]&0}$ es una sucesión exacta corta en la categoría $\mc{C}$.
\end{itemize}

\vspace{0.5cm}

\subsection{Variedades algebraicas} La idea intuitiva de una variedad afín corresponde al conjunto de ceros de un sistema de ecuaciones polinomiales. Una variedad algebraica, ser\'a una variedad que admite un cubrimiento finito por variedades afines, es decir, es un espacio que localmente es el conjunto de ceros de un sistema polinomial. Estos conjuntos de ceros no necesariamente coinciden de un abierto a otro. En esta sección se darán algunas ideas básicas sobre variedades algebraicas. Por simplicidad el campo de base ser\'an los números complejos, $\mathbb{C}$.\\

 Sea $\mb{C}[X_1, \ldots, X_n]$ el anillo de polinomios en $n$ variables. Considere un ideal $I$ de $\mb{C}[X_1, \ldots, X_n]$ y defina el conjunto $V(I) = \{x\in \mb{C}^n\; | \; f(x)=0, \; \forall f\in I\}$. $V(I)$ se llama conjunto algebraico. Como primeros ejemplos tenemos que $V(0)=\mb{C}^n$ y que $V(1)=\emptyset$. Claramente, $V(I)\subseteq V(0)=\mb{C}^n$ para todo ideal $I$ del anillo de polinomios.\\

 Los conjuntos algebraicos son cerrados bajo uniones finitas y bajo intersecciones arbitrarias. Recuerde que $V(0)=\mb{C}^n$ y $V(1)=\emptyset$ son conjuntos algebraicos. Así, los conjuntos algebraicos forman una topología (por conjuntos cerrados) de $\mb{C}^n$. Esta topología se llama \emph{la topología de Zariski}. Los abiertos básicos de esta topología se denotan por $D(f) = \mb{C}^n \setminus V(f)$, para $f$ un polinomio.\\

\begin{obs} El espacio $\mb{C}^n$ est\'a dotado de dos topologías, la topología analítica y la topología de Zariski. Ya que el conjunto de ceros de un conjunto de polinomios es cerrado en la topología analítica, tenemos que la topología analítica es más fina que la topología de Zariski.\end{obs}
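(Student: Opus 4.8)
El plan es demostrar la inclusi\'on de topolog\'ias verificando que todo cerrado en la topolog\'ia de Zariski es tambi\'en cerrado en la topolog\'ia anal\'itica, lo cual, por definici\'on, equivale a que la topolog\'ia anal\'itica sea m\'as fina. Primero recordar\'ia que un cerrado de Zariski es, por construcci\'on, de la forma $V(I)$ para alg\'un ideal $I\subseteq\mb{C}[X_1,\ldots,X_n]$, y que cada polinomio $f\in\mb{C}[X_1,\ldots,X_n]$ define una funci\'on continua $f:\mb{C}^n\to\mb{C}$ respecto de la topolog\'ia anal\'itica, por obtenerse mediante sumas y productos finitos de las funciones coordenadas $X_i$, que son continuas.

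El paso central ser\'ia observar que $V(f)=f^{-1}(\{0\})$ es la preimagen del cerrado $\{0\}\subseteq\mb{C}$ bajo una aplicaci\'on continua, de donde $V(f)$ resulta cerrado en la topolog\'ia anal\'itica. Luego escribir\'ia $V(I)=\bigcap_{f\in I}V(f)$ como intersecci\'on de cerrados anal\'iticos; puesto que toda intersecci\'on de cerrados es cerrada, se concluye que $V(I)$ es cerrado en la topolog\'ia anal\'itica. Tomando complementos, todo abierto de Zariski es abierto anal\'itico, que es exactamente la afirmaci\'on buscada.

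No anticipo ning\'un obst\'aculo serio en este argumento; lo \'unico que conviene cuidar es que la identidad $V(I)=\bigcap_{f\in I}V(f)$ involucra una intersecci\'on posiblemente infinita de cerrados (no hace falta apelar a que $I$ sea finitamente generado, aunque $\mb{C}[X_1,\ldots,X_n]$ sea noetheriano), lo cual es l\'icito porque los cerrados de un espacio topol\'ogico son estables bajo intersecciones arbitrarias. Para redondear la observaci\'on a\~nadir\'ia que la inclusi\'on es \emph{estricta}: una bola abierta $B\subseteq\mb{C}^n$ en la m\'etrica usual es abierta en la topolog\'ia anal\'itica pero no en la de Zariski, pues todo abierto de Zariski no vac\'io es denso en la topolog\'ia anal\'itica, mientras que $B$ no lo es.
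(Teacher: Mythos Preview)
Your argument is correct and follows exactly the reasoning the paper itself gives: the observation in the paper is stated as a remark without separate proof, its justification being the single clause ``ya que el conjunto de ceros de un conjunto de polinomios es cerrado en la topolog\'ia anal\'itica''. You have simply unpacked that sentence into the standard verification (polynomials are analytically continuous, $V(f)=f^{-1}(\{0\})$, $V(I)=\bigcap_{f\in I}V(f)$), and your additional remark on strictness is a harmless bonus not present in the paper.
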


 El espacio $\mb{C}^n$ con la topología de Zariski, está dotado de manera natural de una familia distinguida de funciones, a saber, las funciones polinomiales. Estas funciones están en correspondencia biyectiva con el anillo de polinomios $\mb{C}[X_1, \ldots, X_n]$. A un polinomio visto como una función sobre $\mb{C}^{n}$ lo llamaremos una función regular, y el conjunto de funciones regulares de $\mb{C}^n$ se denotará por $\mc{O}_{\mb{c}^n}(\mb{C}^n) : = \mb{C}[X_1, \ldots, X_n]$.\\ 

 Las funciones regulares se pueden definir tambi\'en para todo abierto (con la topología de Zariski) $U$ de $\mb{C}^n$, el conjunto de tales funciones se denota por $\mc{O}_{\mb{C}^n}(U)$ y está en correspondencia con las funciones $f: U \rightarrow \mb{C}$ tales que para cada $V\subset U$ abierto básico, $f|_V$ es una función racional que no tiene polos en $V$. Se puede mostrar que la asignación $U\mapsto \mc{O}_{\mb{C}^n}(U)$ para todo abierto de $\mb{C}^n$ es un haz de $\mb{C}$-álgebras. El par $(\mb{C}^n, \mc{O}_{\mb{C}^n})$ se llama \emph{espacio afín $n$-dimensional}.\\

 Sea $Y$ un conjunto cerrado irreducible del espacio afín $\mb{C}^n$, es decir $Y$ no puede ser expresado como la unión de dos subconjuntos cerrados propios. $\mc{O}_Y(Y)$ se define como el conjunto de funciones regulares en $Y$, es decir, $\mc{O}_{Y}(Y) = \mc{O}_{\mb{c}^n}(\mb{C}^n)/I(Y)$, donde $I(Y)=\{f\in \mc{O}_{\mb{c}^n}(\mb{C}^n) \; | \; f(y)=0 \; \forall y\in Y\}$ es el ideal que define a $Y$. Similarmente, se puede definir un haz de $\mb{C}$-álgebras $\mc{O}_Y$ asociado a la variedad $Y$.\\

 Ahora bien, una \emph{variedad algebraica afín} es un par $(Y,\mc{O}_Y)$ donde $Y$ es un cerrado irreducible del espacio afín $\mb{C}^n$ y $\mc{O}_Y$ es el haz de funciones regulares. El par $(Y,\mc{O}_Y)$ se escribe simplemente como $Y$, teniendo presente que este espacio está dotado del haz estructural $\mc{O}_Y$.\\ 

 Dadas dos variedades algebraicas afines $X$ y $Y$, un morfismo $\varphi:X\rightarrow Y$ es una función continua tal que para todo abierto $U$ de $X$ y toda función regular $g\in \mc{O}_Y(U)$ se tiene que $g\circ \varphi\in \mc{O}_X(\varphi^{-1}(U))$. El morfismo $\varphi$ es un isomorfismo si $\varphi$ es un homeomorfismo y la asignación en anillos de funciones regulares $g\mapsto g\circ \varphi$ es un isomorfismo de anillos.\\

 Decimos que un par $(X, \mc{O}_X)$, donde $X$ es un espacio topológico y $\mc{O}_X$ es un haz de $\mb{C}$-álgebras sobre $X$, es una {\it variedad algebraica} si $X$ admite una cubierta finita por variedades algebraicas afines.\\

\begin{obs}
Las variedades algebraicas serán \emph{separadas}, intuitivamente esto significa ser Hausdorff en el sentido usual.
\end{obs}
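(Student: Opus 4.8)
El plan es formalizar la afirmaci\'on intuitiva como la siguiente equivalencia: una variedad algebraica $X$ sobre $\mb{C}$ es separada si y s\'olo si el espacio subyacente $X^{\mathrm{an}}$, dotado de la topolog\'ia anal\'itica, es de Hausdorff. La estrategia consiste en traducir la condici\'on algebraica de separaci\'on a la caracterizaci\'on topol\'ogica cl\'asica de los espacios de Hausdorff por medio de la diagonal.

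Primero fijar\'ia la definici\'on precisa de variedad separada en t\'erminos del morfismo diagonal $\Delta\colon X\to X\times X$, $x\mapsto(x,x)$: se exige que la imagen $\Delta(X)$ sea un cerrado de Zariski en la variedad producto $X\times X$. Enseguida recordar\'ia el hecho elemental de topolog\'ia de conjuntos seg\'un el cual un espacio $Y$ es de Hausdorff si y s\'olo si su diagonal $\{(y,y):y\in Y\}$ es cerrada en $Y\times Y$ con la topolog\'ia producto. El puente entre ambas formulaciones lo proporcionar\'ian dos ingredientes: (i) sobre la variedad producto, la topolog\'ia anal\'itica de $(X\times X)^{\mathrm{an}}$ coincide con la topolog\'ia producto de $X^{\mathrm{an}}\times X^{\mathrm{an}}$; y (ii) por la Observaci\'on anterior, la topolog\'ia anal\'itica refina a la de Zariski, de modo que todo cerrado de Zariski es cerrado anal\'itico.

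Con estos ingredientes, la implicaci\'on ``separada $\Rightarrow$ Hausdorff'' resultar\'ia inmediata: si $\Delta(X)$ es cerrado de Zariski en $X\times X$, por (ii) tambi\'en es cerrado anal\'itico y, por (i), esto equivale a que la diagonal sea cerrada en $X^{\mathrm{an}}\times X^{\mathrm{an}}$; el criterio topol\'ogico entrega entonces que $X^{\mathrm{an}}$ es de Hausdorff. Para el rec\'iproco razonar\'ia localmente, apoy\'andome en el cubrimiento finito por cartas afines del que dispone toda variedad algebraica: cada carta af\'in es separada, pues la diagonal de $\mb{C}^n$ se obtiene cortando $\mb{C}^n\times\mb{C}^n$ con las ecuaciones polinomiales $X_i-Y_i=0$, y quedar\'ia por controlar, usando la hip\'otesis de Hausdorff, el pegado de las cartas sobre los solapamientos $U_i\cap U_j$.

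El principal obst\'aculo, y el punto donde hay que tener cuidado, es que la topolog\'ia de Zariski sobre la variedad producto $X\times X$ \emph{no} es la topolog\'ia producto de las topolog\'ias de Zariski de los factores ---ya en $\mb{C}\times\mb{C}=\mb{C}^2$ la diagonal es cerrada, lo que fallar\'ia en el producto de las topolog\'ias cofinitas---, de suerte que el criterio topol\'ogico de Hausdorff no puede aplicarse de forma directa sobre la topolog\'ia de Zariski. Precisamente por ello es indispensable pasar por la topolog\'ia anal\'itica v\'ia (i) y (ii), y ah\'i reside el contenido geom\'etrico de la equivalencia. La verificaci\'on cuidadosa del hecho (i) para variedades generales y el pegado de cartas afines en el rec\'iproco son las partes t\'ecnicas que exigir\'ian mayor trabajo.
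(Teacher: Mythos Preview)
La Observaci\'on en cuesti\'on no es un enunciado que el art\'iculo demuestre: es una \emph{convenci\'on} seguida de un comentario informal. Los autores simplemente declaran que, en adelante, todas las variedades algebraicas se supondr\'an separadas, y a\~naden que intuitivamente esto corresponde a la condici\'on de Hausdorff. No hay demostraci\'on en el texto porque no hay nada que demostrar; el prop\'osito de la frase es fijar una hip\'otesis de trabajo, no establecer una equivalencia precisa.

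Tu propuesta, en cambio, formaliza el comentario como el enunciado ``$X$ es separada si y s\'olo si $X^{\mathrm{an}}$ es Hausdorff'' e intenta probarlo. Eso va mucho m\'as all\'a de lo que el art\'iculo pretende. Dicho esto, tu implicaci\'on directa (separada $\Rightarrow$ Hausdorff) est\'a bien planteada y es correcta. El rec\'iproco, sin embargo, queda solo esbozado: decir que ``quedar\'ia por controlar el pegado de cartas usando la hip\'otesis de Hausdorff'' no constituye un argumento. El ingrediente que falta es que el morfismo diagonal $\Delta\colon X\to X\times X$ es siempre una inmersi\'on, de modo que $\Delta(X)$ es \emph{localmente cerrado} en la topolog\'ia de Zariski; a partir de ah\'i hay que invocar que un subconjunto Zariski-constructible de una variedad compleja que es cerrado en la topolog\'ia anal\'itica es tambi\'en Zariski-cerrado (un hecho no trivial, ligado a que la clausura anal\'itica y la clausura de Zariski de un localmente cerrado coinciden). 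Sin esta pieza el rec\'iproco no cierra.
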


\begin{ex} La l\'inea proyectiva $\mb{P}^1$ se define como el conjunto $\mb{C}\sqcup\{\infty\}$, donde $\infty$ es un símbolo, el punto al infinito. Equivalentemente, $\mb{P}^1$ se define como el espacio de clases de equivalencia de parejas $(x_0,x_1)$ de elementos de $\mb{C}^2$ tale que $x_0\neq0$ y $x_1\neq 0$, bajo la relación de equivalencia $(x_0,x_1) \sim (\lambda x_0, \lambda x_1)$ con $\lambda\in \mb{C}\setminus\{0\}$. La clase de equivalencia de un punto en $\mb{P}^1$ se denotará por $[x_0:x_1]$. Las funciones regulares en los abiertos de $\mb{P}^1$ son localmente cocientes de polinomios homogéneos en dos variables del mismo grado. Se puede mostrar que $\mb{P}^1$ es una variedad algebraica la cual tiene cubierta $U_0, U_1$ donde $U_0 = \mb{P}^1 - \{[1,0]\} \cong \mb{C}$ y $U_1 = \mb{P}^n - \{[0,1]\} \cong \mb{C}$.
\end{ex}

\begin{obs}
El haz de funciones regulares de $\mb{P}^1$ se puede construir ``pegando'' los haces de su cubierta afín.
\end{obs}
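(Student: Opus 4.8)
The plan is to exhibit $\mc{O}_{\mb{P}^1}$ as the sheaf produced by the general \emph{gluing construction} applied to the two affine charts $U_0,U_1$, and then to identify that glued sheaf with the sheaf of regular functions defined through homogeneous fractions. First I would fix coordinates compatible with the chart labels: on $U_0 = \mb{P}^1\setminus\{[1:0]\} = \{x_1\neq 0\}$ write $t = x_0/x_1$, and on $U_1 = \mb{P}^1\setminus\{[0:1]\} = \{x_0\neq 0\}$ write $s = x_1/x_0$, so that each chart is isomorphic to the affine line $\mb{C}$ and $\mc{O}_{\mb{P}^1}|_{U_i}$ becomes, under these isomorphisms, the structure sheaf $\mc{O}_{\mb{C}}$. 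On the overlap $U_0\cap U_1 = \{[x_0:x_1] : x_0 x_1\neq 0\}$ the two coordinates satisfy $st = 1$, so the transition is $s = t^{-1}$; this induces an isomorphism of sheaves of $\mb{C}$-algebras $\varphi:\mc{O}_{\mb{C}}|_{t\neq 0}\to\mc{O}_{\mb{C}}|_{s\neq 0}$ which on global sections is $\mb{C}[t,t^{-1}]\xrightarrow{\sim}\mb{C}[s,s^{-1}]$, $t\mapsto s^{-1}$.

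Next I would invoke the gluing principle for sheaves: given an open cover $\{U_i\}$ of a space $X$, sheaves $\mc{F}_i$ on each $U_i$, and isomorphisms $\varphi_{ij}:\mc{F}_j|_{U_i\cap U_j}\to\mc{F}_i|_{U_i\cap U_j}$ with $\varphi_{ii}=\mathrm{id}$ and the cocycle identity $\varphi_{ij}\circ\varphi_{jk}=\varphi_{ik}$ on triple overlaps, there is a sheaf $\mc{F}$ on $X$, unique up to canonical isomorphism, with isomorphisms $\mc{F}|_{U_i}\cong\mc{F}_i$ respecting the $\varphi_{ij}$. Here there are only two charts, so the only datum is the transition $\varphi$ above and its inverse; the cocycle condition is vacuous (every triple overlap coincides with $U_0\cap U_1$ and the condition reduces to $\varphi\circ\varphi^{-1}=\mathrm{id}$), so the gluing data is automatically consistent. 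Concretely, for an open $U\subseteq\mb{P}^1$ I would set
\[
\mc{F}(U) = \{(f_0,f_1) : f_i\in\mc{O}_{U_i}(U\cap U_i),\ \varphi(f_1)|_{U\cap U_0\cap U_1} = f_0|_{U\cap U_0\cap U_1}\},
\]
the equalizer (fibre product over the overlap) of the two chart sheaves, with restriction maps inherited componentwise.

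Verifying that $\mc{F}$ is a sheaf is routine: being a fibre product of sheaves, it inherits the separation and gluing axioms termwise, which I would check directly on the equalizer diagram defining a sheaf. The substantive step is to identify $\mc{F}$ with $\mc{O}_{\mb{P}^1}$. Restriction gives a natural morphism $\mc{O}_{\mb{P}^1}\to\mc{F}$ sending a regular function $g$ on $U$ to the pair $(g|_{U\cap U_0},g|_{U\cap U_1})$, which is compatible on the overlap precisely because the two chart expressions describe the same function. To see this is an isomorphism I would argue that both sides are sheaves agreeing on the cover $\{U_0,U_1\}$ (each equals $\mc{O}_{\mb{C}}$ there) compatibly with restrictions, so the morphism is an isomorphism; as a consistency check one recovers $\Gamma(\mb{P}^1,\mc{O}_{\mb{P}^1})=\mb{C}$, since a compatible pair with $f_0\in\mb{C}[t]$, $f_1\in\mb{C}[s]$ and $f_0(t)=f_1(t^{-1})$ forces both polynomials to be constant.

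The main obstacle is exactly this final identification: one must confirm that the abstractly glued sheaf $\mc{F}$ is not merely \emph{some} sheaf with the correct restrictions but is canonically the sheaf $\mc{O}_{\mb{P}^1}$ defined by local homogeneous fractions of equal degree. The crux is to match the ``degree-zero homogeneous rational function'' description of a regular function on $\mb{P}^1$ with its two dehomogenized expressions in the charts, and to verify that the inversion transition $s = t^{-1}$ is precisely the coordinate change relating those expressions. Once this compatibility is in place, both the gluing and the identification follow immediately.
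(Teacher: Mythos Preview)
Your argument is correct and complete: you set up the two affine charts with the transition $s=t^{-1}$, invoke the standard gluing lemma for sheaves (with the cocycle condition vacuous for a two-set cover), and identify the resulting sheaf with $\mc{O}_{\mb{P}^1}$ via the restriction map, checking it is an isomorphism on the cover. The computation $\Gamma(\mb{P}^1,\mc{O}_{\mb{P}^1})=\mb{C}$ is a nice sanity check.

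There is nothing to compare against: in the paper this statement is an \emph{observaci\'on} offered without proof, immediately following the definition of $\mb{P}^1$. Your proposal therefore supplies an argument the paper omits entirely. One small caution on conventions: the paper is not internally consistent about which point $U_0$ and $U_1$ omit (compare the example preceding the remark with Example~\ref{locBBP1}), so when splicing your proof in you should align your chart labels with whichever convention is closest in the text; the mathematics is of course unaffected.
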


\subsection{$\mc{O}_X$-módulos} Los haces que trabajaremos en el resto de estas notas tienen la estructura extra de módulo. Veremos su definición, algunas propiedades y ejemplos de los mismos.\\

 Suponga que $(X,\mc{O}_X)$ una variedad algebraica y sea $\mc{F}$ un haz sobre $X$. Se dice que $\mc{F}$ es un $\mc{O}_X$-módulo si para todo abierto $U$ de $X$ se tienen que $\mc{F}(U)$ es un $\mc{O}_X(U)$-módulo, y si para $V\subseteq U$, $V$ abierto, el morfismo restricción $\mc{F}(U)\rightarrow \mc{F}(V)$ es compatible con la estructura de módulo vía el morfirmo $\mc{O}_X(U)\rightarrow \mc{O}_X(V)$. Un $\mc{O}_X$-módulo $\mc{I}$ es un haz de ideales si $\mc{I}(U)$ es un ideal de $\mc{O}_X(U)$ para todo abierto $U$ de $X$.\\

 Sean $\mc{F}$ y $\mc{G}$ dos $\mc{O}_X$-módulos. Un morfismo $\varphi:\mc{F}\rightarrow \mc{G}$ de haces es un morfismo de $\mc{O}_X$-módulos si $\varphi(U)$ es un $\mc{O}_X(U)$-homomorfismo. Un $\mc{O}_X$-módulo $\mc{F}$ se dice \emph{libre} si $\mc{F}\cong \mc{O}_X^n$, para $n\in \mb{Z}_{\geq 0}\cup\{\infty\}$. $\mc{F}$ se dice \emph{localmente libre} si existe una cubierta $\{U_i\}$ de $X$ tal que $\mc{F}|_{U_i}$ es libre.\\

 Sea $X$ una variedad algebraica y $\mc{F}$ un $\mc{O}_X$-módulo. Decimos que  $\mc{F}$ es \emph{cuasi-coherente} si admite resolución $\mc{O}_X^{I}\rightarrow \mc{O}_X^{J} \rightarrow \mc{F} \rightarrow 0$, donde $I$ y $J$ son dos conjuntos indexantes arbitrarios. Adem\'as $\mc{F}$ es \emph{coherente} si los conjunto $I$ y $J$ son finitos.\\

\begin{obs}
En otras palabras, la definición anterior es equivalente a la siguiente; un haz $\mc{F}$ es cuasi-coherente si para todo abierto afin $U$ el haz $\mc{F}|_U$ es generado por un $\mc{O}_X(U)$-módulo $M_U$. $\mc{F}$ es coherente si el módulo $M_U$ es finitamente generado. Note que el módulo depende del abierto que se escoja. 
\end{obs}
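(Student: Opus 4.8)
The plan is to establish both implications, using as the central tool the dictionary between $\mc{O}_X(U)$-modules and $\mc{O}_X$-modules over an affine open $U$. Fix such a $U$, write $A=\mc{O}_X(U)$, and for an $A$-module $M$ let $\widetilde{M}$ be the $\mc{O}_U$-module determined on los abiertos b\'asicos by $\widetilde{M}(D(f))=M_f$, so that its stalk at $x$ is the localization $M_{\mathfrak{m}_x}$. I read the ``following'' definition as the assertion that $\mc{F}|_U\cong\widetilde{M_U}$ for some $A$-module $M_U$. The two properties of $M\mapsto\widetilde{M}$ that I will invoke repeatedly are that it is exact, since it is computed by localization and exactness of sheaf sequences is detected on stalks, and that it satisfies $\widetilde{M}(U)=M$; I also use that it is fully faithful, so a morphism $\widetilde{A^{I}}\to\widetilde{A^{J}}$ comes from a unique $A$-linear map $A^{I}\to A^{J}$.

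For the forward implication, suppose $\mc{F}$ admits a resoluci\'on $\mc{O}_X^{I}\xrightarrow{\phi}\mc{O}_X^{J}\to\mc{F}\to 0$, so that $\mc{F}=\cok\phi$, and fix an affine open $U$. Since restriction to an open is exact, $\mc{F}|_U=\cok(\phi|_U)$ with $\phi|_U:\mc{O}_U^{I}\to\mc{O}_U^{J}$. Identifying $\mc{O}_U^{I}=\widetilde{A^{I}}$ and $\mc{O}_U^{J}=\widetilde{A^{J}}$, full faithfulness yields an $A$-linear $\psi:A^{I}\to A^{J}$ with $\widetilde{\psi}=\phi|_U$. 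Setting $M_U=\cok\psi$ and using exactness of $M\mapsto\widetilde{M}$ gives $\widetilde{M_U}=\cok\widetilde{\psi}=\mc{F}|_U$; evaluating at $U$ identifies $M_U=\mc{F}(U)$. Hence $\mc{F}|_U$ is exactly the sheaf generated by the module $M_U=\mc{F}(U)$, which is the local condition.

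For the converse, assume $\mc{F}|_U\cong\widetilde{M_U}$ on every affine open $U$. As $X$ is a variety it carries a finite affine cover $U_1,\dots,U_k$; put $A_i=\mc{O}_X(U_i)$ and $M_i=M_{U_i}$. Each $A_i$-module admits a free presentation $A_i^{I_i}\to A_i^{J_i}\to M_i\to 0$, and applying $M\mapsto\widetilde{M}$ turns this into a \emph{local} resoluci\'on $\mc{O}_{U_i}^{I_i}\to\mc{O}_{U_i}^{J_i}\to\mc{F}|_{U_i}\to 0$ by free sheaves. What remains is to manufacture from these a single resoluci\'on $\mc{O}_X^{I}\to\mc{O}_X^{J}\to\mc{F}\to 0$ valid on all of $X$, with $I$ and $J$ built from the $I_i$ and $J_i$.

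This assembling step is the main obstacle, and it is genuinely the heart of the matter: a section of $\mc{F}$ defined only over $U_i$ need not extend to $X$, so one cannot naively glue the local maps $\mc{O}_{U_i}^{J_i}\to\mc{F}|_{U_i}$ into a global surjection $\mc{O}_X^{J}\to\mc{F}$. The construction must instead be organized compatibly across the finite cover, producing enough global maps $\mc{O}_X\to\mc{F}$ that their images generate every stalk; surjectivity of the resulting $\mc{O}_X^{J}\to\mc{F}$ and the identification of its kernel with the image of $\mc{O}_X^{I}$ can then be checked stalk by stalk, each stalk sitting inside some $U_i$ where the assertion reduces to the localized free presentation of $M_i$. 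It is precisely here that the finiteness of the cover must be exploited, and where, for a general (non-affine) $X$, the literal existence of a global resolution is the delicate point requiring care. For the coherent case one repeats the argument tracking cardinalities: finiteness of the index sets $I,J$ corresponds to each $M_i$ being finitely generated with finitely many relations, and since there are finitely many charts the assembled $I$ and $J$ stay finite.
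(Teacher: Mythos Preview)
The paper gives no proof of this observation; it is stated as a remark without justification. So there is nothing to compare your argument against on the paper's side.

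On the mathematics itself: your forward direction is fine, but in the converse you correctly identify the obstacle and then do not overcome it. You write that ``the construction must instead be organized compatibly across the finite cover'' and that this ``is the delicate point requiring care'', but you never perform the construction. As written, the converse is an outline of a difficulty, not a proof.

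More seriously, the equivalence as literally stated is false for the coherent case, and your sketch cannot be completed. Take $X=\mb{P}^1$ and $\mc{F}=\mc{O}_{\mb{P}^1}(-1)$. On each affine chart $U_i\cong\mb{C}$ one has $\mc{F}|_{U_i}\cong\widetilde{M_i}$ with $M_i\cong\mb{C}[z]$ finitely generated, so the local condition holds. But $\Gamma(\mb{P}^1,\mc{F})=0$, so there is no nonzero map $\mc{O}_X\to\mc{F}$ at all, and a global surjection $\mc{O}_X^{J}\to\mc{F}$ with $J$ finite (or even infinite) is impossible. Thus the paper's ``definition'' with a global resolution $\mc{O}_X^{I}\to\mc{O}_X^{J}\to\mc{F}\to 0$ is not equivalent to the local module condition unless one reads that resolution as being required only locally, which is the standard convention (cf.\ \cite[II.5]{H}). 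Your instinct that the gluing step is where the argument breaks is exactly right; the point is that it does not merely ``require care'' --- it fails.
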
 

 A continuación se presentan dos ejemplos relevantes para el estudio de los $\mc{D}$-módulos, a saber, el haz tangente y el haz de 1-formas diferenciales. Cabe mencionar que el haz tangente y el haz de 1-formas aparecen en geometría diferencial como fibrados vectoriales naturales sobre $X$.\\

\begin{ex}[El haz tangente] Denote por $\underline{End}_{\mb{C}}(\mc{O}_X)$ el haz de endomorfismos lineales del anillo $\mc{O}_X$. Una sección $\theta\in End_{\mb{C}}(\mc{O}_X)(X)$ se dice una derivación si para todo abierto $U$ de $X$ se tiene que $\theta|_U\in Der_{\mb{C}}(\mc{O}_X(U))$, donde $Der_{\mb{C}}(\mc{O}_X(U))$ denota el espacio de derivaciones lineales del anillos $\mc{O}_X(U)$.

 Denote por $\mc{T}_X(U)$ el conjunto de derivaciones sobre $U$. La asignación $U\mapsto \mc{T}_X(U)$ define un $\mc{O}_X$-módulo el cual se denota por $\mc{T}_X$. Dado que los módulos $Der_{\mb{C}}(\mc{O}_X(U))$ son finitamente generados, el haz $\mc{T}_X$ es coherente.
\end{ex}

\begin{ex}
Sea $X=\mb{C}^n$. Entonces $\mc{O}_X(X)=\mb{C}[X_1, \ldots, X_n]$ y como se vió en el ejemplo \ref{baseder}, $Der_{\mb{C}}(\mb{C}[X_1, \ldots, X_n])$ est\'a generado por los s\'imbolos $\partial_1, \ldots, \partial_n$ tales que $\partial_i(X_j)=\delta_{ij}$ y extendidos por la regla de Leibniz. Para un abierto de $X$, las derivaciones sobre tal abierto se corresponderán con las restricciones de las derivaciones de $X$. Si $Y\subseteq X$ una variedad af\'in cerrada, entonces $\mc{O}_Y(Y) = \mb{C}[X_1, \ldots, X_n]/I(Y)$ y $Der_{\mb{C}}(\mc{O}_Y(Y))= \{\theta\in Der_{\mb{C}}(\mb{C}[X_1, \ldots, X_n])\; | \; \theta(I)\subseteq I\}$, es decir, $Der_{\mb{C}}(\mc{O}_Y(Y))$ está generado por los generadores $\partial_i$ tales que $\partial_i(I(Y))\subseteq I(Y)$.
\end{ex}

 Para poder definir el haz de formas diferenciales recuerde que dada una $\mb{C}$-álgebra $A$ entonces el $A$-módulo de 1-formas diferenciales $\Omega_A$ se define como el módulo libre generado en los s\'imbolos $da$ para toda $a\in A$ sujeto a las relaciones $d(a+a')=da+da'$, $d(aa') = (da)a' + a(da)$ para todas $a,a'\in A$ y $dc=0$ para toda $c\in \mb{C}$. En particular, si $A=\mb{C}[X_1, \ldots, X_n]$ se tiene que $\Omega_A$ es el módulo generado por los símbolos $dX_1, \ldots, dX_n$. Más aún se puede mostrar que $dX_i(\partial_j)=\delta_{ij}$, es decir, $\Omega_A$ y $Der_{\mb{C}}(A)$ son duales como espacios vectoriales.

\begin{ex}[El haz de 1-formas] Para $X$ una variedad algebraica y $U$ un abierto afín de $X$, $\Omega_{\mc{O}_X(U)}$ denota el $\mc{O}_X(U)$-módulo de 1-formas diferenciales del anillo $\mc{O}_X(U)$. La asignación $U\mapsto \Omega_{\mc{O}_X(U)} = \Omega_X(U)$ define un haz de $\mc{O}_X$-módulos el cual se denota por $\Omega_X$. Por definición este es un haz coherente.
\end{ex}

 Para cerrar esta sección, se definen el tipo de variedades algebraicas con las cuales se trabajará en el resto de esta nota. Este tipo de variedades permitirán entender de manera mucho más concreta los objetos de interés, los $\mc{D}$-módulos.\\

 Sea $X$ una variedad algebraica y sea $x\in X$. Recuerde que el stalk del haz $\mc{O}_X$ en el punto $x$ se denota por $\mc{O}_{X,x}$. Este es un anillo local con ideal maximal $\mf{m}_x$. A este ideal pertenecen todas (los g\'ermenes de) las funciones que se desvanecen en $x$. Se dice que $X$ es \emph{suave} en el punto $x\in X$ si el anillo $\mc{O}_{X,x}$ es un anillo local regular, es decir, si la dimensión del espacio vectorial $\mf{m}_x/\mf{m}_x^2$ coincide con la dimensión de Krull de $\mc{O}_{X,x}$. Equivalentemente, si $\Omega_{X,x}$ es libre de rango dimensión de Krull de $\mc{O}_{X,x}$. La dimensión de $X$ en el punto $x$ se denota por $\dim_xX$ y es por definición la dimensión de $\mf{m}_x/\mf{m}_x^2$.\\

 Una variedad algebraica es \emph{suave} si lo es en cada punto $x\in X$. A una variedad suave se le asocia un número llamado dimensión y denotado por $\dim X$, que por definición es $\dim_xX$ para cualquier $x\in X$. Como el campo base son los números complejos, entonces las variedades suaves consideradas con la topología analítica tienen estructura de variedades complejas. Ejemplos de variedades algebraicas suaves son el espacio afín y el espacio proyectivo.\\

\begin{teo}\label{teodelabase}
Sea $X$ una variedad algebraica suave de dimensión $n$. Entonces para toda $x\in X$, existe un abierto afín $V\ni x$ y existen funciones regulares $X_i\in \mc{O}_X(V)$ y $\partial_i\in \mc{T}_X(V)$ para $i=1, \ldots, n$ que cumplen $[\partial_i, \partial_j]=0$, $\partial_i(X_j)=\delta_{ij}$ y $\mc{T}_X|_V \cong \bigoplus_{i=1}^n \mc{O}_X|_V \partial_i$. 
\end{teo}

\begin{proof} Ver Teorema A.5.1 en \cite{RTT}. 
\end{proof}

 Al sistema $\{X_1, \ldots, X_n, \partial_1, \ldots, \partial_n\}$ definido en una vecindad afín de un punto $x\in X$ se le llamará \emph{sistema de coordenadas locales} en $x$.

\section{$\mc{D}$-módulos}\label{Dmoddef} 

 En esta sección se presentarán las definiciones del haz de operadores diferenciales sobre una variedad algebraica suave y los $\mc{D}$-módulos sobre tal haz.\\

\subsection{Definiciones}\label{definiciones-dmodulos} Sea $X$ una variedad algebraica suave de dimensión $n$. El haz de operadores diferenciales sobre $X$, denotado por $\mc{D}_X$, se define como el $\mb{C}$-subhaz de $\underline{End}_{\underline{\mb{C}}}(\mc{O}_X)$ generado por $\mc{O}_X$ y $\mc{T}_X$. En otras palabras, para todo abierto $U$ de $X$ el anillo $\mc{D}_X(U)$ está generado por s\'imbolos $\{\hat{f}, \hat{\theta}\; | \; f\in\mc{O}_X(U), \theta\in \mc{T}_X(U) \}$ sujetos a las relaciones:
\vspace{0.2cm}
\begin{itemize}
    \item $\hat{f} + \hat{g} = \widehat{f+g}$ y $\hat{f}\hat{g} = \widehat{fg}$, para $f,g\in \mc{O}_X(U)$.
    \item $\hat{\theta} + \hat{\zeta} = \widehat{\theta + \zeta}$ y $[\hat{\theta},\hat{\zeta}] = \widehat{[\theta,\zeta]}$, para $\theta, \zeta\in \mc{T}_X(U)$.
    \item $\hat{f}\hat{\theta} = \widehat{f\theta}$, $f\in \mc{O}_X(U)$, $\theta\in \mc{T}_X(U)$.
    \item $[\hat{\theta}, \hat{f}] = \widehat{\theta(f)}$, $f\in \mc{O}_X(U)$, $\theta\in \mc{T}_X(U)$.
\end{itemize}
\vspace{0.3cm}
 Equivalentemente, el haz de operadores diferenciales sobre $X$ se puede definir de la siguiente forma. Para cualquier punto $x\in X$ considere $\{X_i, \partial_i\}_{i=1}^n$ un sistema local de coordenadas alrededor de $x$ en $U\ni x$ una vecindad abierta afín. Entonces, 

$$ \mc{D}_X|_U \cong \bigoplus_{\alpha\in \mb{N}^n} \mc{O}_X|_U \partial_1^{\alpha_1}\ldots\partial_n^{\alpha_n}$$
donde $\alpha=(\alpha_1, \ldots, \alpha_n)$.\\

\begin{ex} 
El álgebra de funciones globales para $X=\mb{C}^n$ es el anillo de polinomios en $n$ variables $\mb{C}[X_1, \ldots, X_n]$. Como se vio previamente, $\mc{D}_{\mb{C}^n} = \text{Diff}_{\mb{C}}(\mb{C}[X_1, \ldots, X_n])= D_n$. Expl\'icitamente, $D_n = \mb{C}[X_1, \ldots, X_n, \partial_1, \ldots, \partial_n]/\langle X_iX_j - X_jX_i , \partial_i\partial_j - \partial_j\partial_i , \partial_iX_i - X_i\partial_i - 1, i,j=1, \ldots, n \rangle $. 
\end{ex}

\begin{ex} 
Para $\mb{C}^{*}=\mb{C}\setminus \{ 0 \}$, se tiene que $\Gamma(\mb{C}^{*}, \mc{D}_{\mb{C}^{*}}) \cong \mb{C}[z, z^{-1}, \partial]/\langle [\partial, z] = 1, \; [\partial, z^{-1}] = -z^{-2} \rangle$. 
\end{ex}

\begin{ex}\label{locBBP1} 
Sea $X=\mb{P}^1$ la l\'inea proyectiva. Sean $0=[1:0]$ y $\infty=[0:1]$ y considere la cubierta de $\mb{P}^1$ cuyos abiertos af\'ines son $U_0 = \mb{P}^1\setminus\{ \infty \}$ y $U_1 = \mb{P}^1\setminus\{ 0 \}$. Identifique $U_0$ con $\mb{C}$ mediante el morfismo $z: U_0\rightarrow \mb{C}$, $z([1,:x_1]) \mapsto x_1$ e identifique $U_1$ con $\mb{C}$ mediante el morfismo $\zeta: U_1\rightarrow \mb{C}$, $z([x_0:1]) \mapsto x_0$. Note que $U_0\cap U_1 \cong \mb{C}^{*}$, donde $z$ y $\zeta$ se relacionan mediante la equaci\'on $z= 1/\zeta$. \\

 Se sigue que $\Gamma(U_0, \mc{D}_{\mb{P}^1}) \cong \mb{C}[z, \partial]/ \langle [\partial, z]=1\rangle$, $\Gamma(U_1, \mc{D}_{\mb{P}^1}) \cong \mb{C}[\zeta, \partial_{\zeta}]/ \langle [\partial_{\zeta}, \zeta]=1\rangle$ y $\Gamma(U_0\cap U_1, \mc{D}_{\mb{P}^1}) \cong \mb{C}[z, z^{-1},\partial]/ \langle [\partial, z]=1, [\partial, z^{-1}] = -z^{-2} \rangle$. Los morfirmos de restricción están dados por $z\mapsto z$, $\partial\mapsto \partial$ para el caso $\Gamma(U_0, \mc{D}_{\mb{P}^1}) \rightarrow \Gamma(U_0\cap U_1, \mc{D}_{\mb{P}^1})$ y por $\zeta \mapsto z^{-1}$ y $\partial_{\zeta} \mapsto -z^2\partial$ para el caso $\Gamma(U_1, \mc{D}_{\mb{P}^1}) \rightarrow \Gamma(U_0\cap U_1, \mc{D}_{\mb{P}^1})$.\\

 Ahora bien, como una sección global $s\in \Gamma(\mb{P}^1, \mc{D}_{\mb{P}^1})$ debe satisfacer que la restricción a $U_0$, $s|_{U_0}$, es de la forma $z^i\partial^j$ para algunas $i,j$, y la restricción a $U_1$, $s|_{U_1}$, es de la forma $\zeta^n\partial_{\zeta}^m$ para algunas $n,m$ y que además sobre $U_0\cap U_1$ tales restricciones coincidan. Se sigue que $\Gamma(\mb{P}^1, \mc{D}_{\mb{P}^1}) = span\{ 1, z^i\partial^j\; | i\leq j + 1, \; j>0\}$. Más aún, $\Gamma(\mb{P}^1, \mc{D}_{\mb{P}^1})$ tiene base dada por $\{-\partial, -2z\partial, z^2\partial \}$.\\

 Observe que si $E=-\partial$, $H=-2z\partial$ y $F=z^2\partial$, se tiene que $[H,E]=2E$, $[H,F]=-2F$ y $[E,F]=H$, es decir, $E,H$ y $F$ cumplen las relaciones del álgebra de Lie $\mf{sl}_2(\mb{C})$ y por lo tanto se obtiene un morfismo sobreyectivo $U(\mf{sl}_2(\mb{C})) \rightarrow \Gamma(\mb{P}^1, \mc{D}_{\mb{P}^1})$, cuyo kernel está generado por el elemento de Casimir $C=H^2+2EF+2FE$. Por lo tanto, $U(\mf{sl}_2(\mb{C}))/\langle C \rangle \rightarrow \Gamma(\mb{P}^1, \mc{D}_{\mb{P}^1})$ es un isomorfismo.
\end{ex}
\vspace{0.3cm}
 Sea $U$ un abierto en $X$ y considere un operador $T\in \mc{D}_X(U)$. Se dirá que $T$ es de orden menor o igual a $p$ si para todo abierto af\'in $V\subseteq U$ se tiene que $T|_{V}$ es un operador diferencial de orden menor o igual a $p$, (ver definici\'on \ref{defop}). El orden definido sobre los operadores diferenciales induce una filtraci\'on creciente en $\mc{D}_X(U)$ que se denota por $F\mc{D}_X(U)$. Esta filtración induce una filtraci\'on en $\mc{D}_X$ denotada por $F\mc{D}_X$, de tal forma que el haz graduado $Gr \mc{D}_X$ es un haz de anillos conmutativos tal que $Gr_0 \mc{D}_X \cong \mc{O}_X$.\\

 Por medio de esta filtraci\'on se puede mostrar que $\mc{D}_X$ es un haz cuasi-coherente de $\mc{O}_X$-m\'odulos tanto a izquierda como a derecha. Adem\'as los haces $F_p\mc{D}_X$ son $\mc{O}_X$-coherentes y por lo tanto $Gr_p\mc{D}_X$ es $\mc{O}_X$-coherente. Como ya vimos en el caso de anillos (ver secci\'on 2), la filtraci\'on de grado uno est\'a generada por el anillo y las derivaciones. En este caso se tiene el resultado an\'alogo, es decir, $F_1\mc{D}_X \cong \mc{O}_X\oplus \mc{T}_X$.\\

 Cabe notar que el Teorema \ref{anillo-graduado} se puede globalizar de la siguiente forma. Sea $X$ una variedad algebraica suave, se puede construir una aplicaci\'on s\'imbolo como en el caso de $\mb{C}$-\'algebras, de tal forma que $Gr\mc{D}_X$ es isomorfo como haz graduado al haz de funciones regulares del espacio cotangente (ver \cite{DrM}).\\

 Decimos que un haz $\mc{F}$ de $\mc{O}_X$-módulos es un $\mc{D}_X$-módulo a izquierda, si para todo abierto $U$ de $X$, $\mc{F}(U)$ es un $\mc{D}_X(U)$-módulo a izquierda para el cual las acciones son compatibles con los morfismos de restricción. \\

\begin{obs}
En otras palabras, una estructura de $\mc{D}_X$-m\'odulo sobre un $\mc{O}_X$-m\'odulo $\mc{F}$ se corresponde con un morfismo $\mc{O}_X$-lineal de \'algebras de Lie $\nabla : \mc{T}_X \rightarrow End_{\mb{C}}(\mc{F})$, $\theta \mapsto \nabla_{\theta}$. Es decir, un morfismo que cumple $\nabla_{f\theta} = f\nabla_{\theta}$, $\nabla_{[\theta_1, \theta_2]} = [\nabla_{\theta_1}, \nabla_2]$ y $\nabla_{\theta}(fs) = f\nabla_{\theta}(s) + \theta(f)s$ para toda $f\in \mc{O}_X$, $\theta, \theta_1, \theta_2\in \mc{T}_X$ y $s\in \mc{F}$.
\end{obs}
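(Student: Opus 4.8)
The plan is to unwind the presentation of $\mc{D}_X$ by generators and relations given above and to match each defining relation with one of the three conditions imposed on $\nabla$. Since being a $\mc{D}_X$-module and satisfying the three conditions are both local conditions, compatible with the restriction maps, it suffices to produce, over each affine open $U$, a bijection between $\mc{D}_X(U)$-module structures on the $\mc{O}_X(U)$-module $\mc{F}(U)$ and $\mb{C}$-linear maps $\nabla\colon\mc{T}_X(U)\to\End_{\mb{C}}(\mc{F}(U))$ satisfying the three conditions, and then to note that this bijection commutes with restriction.

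First I would treat the forward direction. A $\mc{D}_X(U)$-module structure on $\mc{F}(U)$ is a ring homomorphism $\mc{D}_X(U)\to\End_{\mb{C}}(\mc{F}(U))$; restricting it along $\mc{O}_X(U)\hookrightarrow\mc{D}_X(U)$, $f\mapsto\hat f$, recovers the given $\mc{O}_X$-module structure, and I set $\nabla_\theta$ to be the operator by which $\hat\theta$ acts. The relation $\hat f\hat\theta=\widehat{f\theta}$ then reads $\nabla_{f\theta}=f\nabla_\theta$, which is the $\mc{O}_X$-linearity in $\theta$; the relation $[\hat\theta,\hat\zeta]=\widehat{[\theta,\zeta]}$ reads $[\nabla_\theta,\nabla_\zeta]=\nabla_{[\theta,\zeta]}$, the Lie-algebra morphism property; and evaluating $[\hat\theta,\hat f]=\widehat{\theta(f)}$ on a section $s$ yields $\nabla_\theta(fs)-f\nabla_\theta(s)=\theta(f)s$, the Leibniz rule. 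This last identity is precisely the obstruction to $\nabla_\theta$ being $\mc{O}_X$-linear, which explains why $\nabla$ takes values in $\End_{\mb{C}}$ rather than in $\End_{\mc{O}_X}$.

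Conversely, given the $\mc{O}_X$-module structure together with a $\nabla$ satisfying the three conditions, I would define an action of the generators by $\hat f\mapsto(s\mapsto fs)$ and $\hat\theta\mapsto\nabla_\theta$ and appeal to the universal property of the presentation: such an assignment extends to a ring homomorphism $\mc{D}_X(U)\to\End_{\mb{C}}(\mc{F}(U))$ exactly when the images of the generators satisfy the four displayed families of relations. The purely algebraic relations among the $\hat f$'s hold because $\mc{F}(U)$ is an $\mc{O}_X(U)$-module, while the three relations involving the $\hat\theta$'s are precisely the three conditions on $\nabla$ read backwards. The two constructions are then visibly mutually inverse, and compatibility with restriction is automatic because $\nabla$, the bracket of $\mc{T}_X$, and the $\mc{O}_X$-action are all sheaf morphisms; hence the local bijections glue.

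The step I expect to carry the real content is the justification that the four displayed relations constitute a \emph{complete} presentation of $\mc{D}_X|_U$ --- equivalently, that $\mc{D}_X|_U$ is free over $\mc{O}_X|_U$ on the monomials $\partial^\alpha$, the local description that rests on the existence of local coordinates from Theorem~\ref{teodelabase}. Granting this, the universal property guarantees that a module structure is \emph{nothing more} than an action of the generators respecting the relations, so no hidden relation can be violated and the reverse construction is well defined; without it one would only obtain an action of some larger quotient. Everything else is a mechanical translation together with the formal sheaf-gluing, so I would expect no further difficulty.
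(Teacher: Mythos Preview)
Your argument is correct. The paper states this as an observation without supplying any proof: it simply writes ``En otras palabras'' and lists the three conditions on $\nabla$, treating the equivalence as an immediate reformulation of the generators-and-relations description of $\mc{D}_X$ given a few lines earlier. Your write-up makes explicit exactly what the paper leaves implicit---matching each of the four families of relations ($\hat f\hat g=\widehat{fg}$, $[\hat\theta,\hat\zeta]=\widehat{[\theta,\zeta]}$, $\hat f\hat\theta=\widehat{f\theta}$, $[\hat\theta,\hat f]=\widehat{\theta(f)}$) to, respectively, the $\mc{O}_X$-module structure, the Lie-morphism condition, the $\mc{O}_X$-linearity of $\nabla$, and the Leibniz rule---and you are right to flag that the converse direction relies on the presentation being \emph{complete}, which in turn rests on the local coordinate description following Teorema~\ref{teodelabase}. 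That is the only substantive point, and you have identified it; the rest is, as you say, mechanical.
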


 En el caso que $\mc{F}$ sea un $\mc{D}_X$-m\'odulo localmente libre sobre $\mc{O}_X$, diremos que el morfismo $\nabla$ es una \emph{conexi\'on plana} para el fibrado vectorial asociado a $\mc{F}$. En general, esto no ocurre con regularidad pues los $\mc{D}_X$-m\'odulos son en su mayor\'ia cuasi coherentes sobre $\mc{O}_X$. Por lo tanto, los $\mc{D}_X$-m\'odulos generalizan la idea de las conexiones planas para cualquier rango.\\

 Un caso particular donde se puede garantizar que un $\mc{D}_X$-m\'odulo sea localmente libre es el siguiente: Si $\mc{F}$ es un $\mc{D}_X$-m\'odulo que adem\'as es $\mc{O}_X$-coherente, entonces $\mc{F}$ es localmente libre sobre $\mc{O}_X$. Por lo tanto, se dice que un $\mc{D}_X$-m\'odulo es una \emph{conexi\'on integrable} o s\'olamente una conexi\'on si es $\mc{O}_X$-coherente.\\

 Una clase de $\mc{D}_X$-m\'odulos de particular importancia son los llamados \emph{holon\'omicos}. Estos $\mc{D}_X$-m\'odulos se pueden definir como se vi\'o en la sección \ref{sopycharvar} en términos de variedades características, pero ya que este tipo de variedades son un poco más complicadas de definir en el caso general, es suficiente decir que los módulos holonómicos son los $\mc{D}_X$-m\'odulos que gen\'ericamente son conexiones, es decir, son m\'odulos para los cuales existe un abierto denso en $X$ tal que su restricci\'on a \'este es una conexi\'on. Los m\'odulos holon\'omicos forman una categor\'ia abeliana y son de particular importancia en teor\'ia de representaciones.\\

\subsection{Ejemplos}
Presentaremos diferentes ejemplos que permitan ilustrar las ideas ya estudiadas de los $\mathcal{D}$-módulos. Para mayor profundidad v\'ease \cite{B}, \cite{DrM} y \cite{RTT}.

\subsubsection{Ejemplo: El trivial} $\mc{D}_X$ es claramente un $\mc{D}_X$-módulo a izquierda y a derecha.\\

\subsubsection{Ejemplo: Un poco menos trivial}\label{ox} El haz de funciones regulares sobre $X$, $\mc{O}_X$, es un $\mc{D}_X$-módulo a izquierda. La acción de las funciones está dada por multiplicación y las derivadas actúan derivando funciones regulares. M\'as aún, como $X$ es una variedad suave, cuenta con un sistema de coordenadas $\{X_i, \partial_i\}_{i=1}^n$, donde $n=\dim X$. Entonces $\mc{O}_X \cong \mc{D}_X / \mc{D}_X(\partial_1, \ldots, \partial_n)$, donde $\mc{D}_X(\partial_1, \ldots, \partial_n)$ es el ideal generado por $(\partial_1, \ldots, \partial_n)$.\\

\subsubsection{Ejemplo: La ``función'' delta}\label{delta} Similar al ejemplo anterior, al considerar el ideal generado por $X_1, \ldots, X_n$, donde estas son funciones regulares que generan un sistema de coordenadas para $X$, def\'inase el módulo $\Delta_X$ como el cociente $\mc{D}_X/\mc{D}_X(X_1, \ldots, X_n)$. Si se denota la imagen de $1$ en el cociente como $\delta$, se sigue que este elemento satisface $\delta\cdot X_i=0$ para toda $i=1,\ldots, n$.\\

 Defina $\delta$ en un punto $x\in X$ de la siguiente forma. Considere $\Delta_x := \mb{C}_x\otimes_{\mc{O}_X}\mc{D}_X$, este módulo es generado por el símbolo $\delta_x$ y cumple que $\delta_x\cdot f = f(x)$ para $f$ una sección local de $X$.\\

\subsubsection{Ejemplo: Formas diferenciales de orden superior - El $\mc{D}$-módulo a derecha} Sea $\omega_X:= \bigwedge^n \Omega_X$, donde $n=\dim X$. $\omega_X$ es el ejemplo protot\'ipico de $\mc{D}_X$-módulo a derecha. Las acciones est\'an dadas como sigue. Sean $f\in \mc{O}_X$, $\zeta\in \mc{T}_X$ y $\alpha\in\omega_X$, entonces $\alpha\cdot f := f\alpha$ y $\alpha\cdot\zeta := -Lie_{\zeta}(\alpha)$, donde la anterior expresión denota la derivada de Lie. \\

 N\'otese que en efecto la estructura est\'a bien definida. Sean $\zeta, \zeta_1,\zeta_2\in \mc{T}_X$, $f, f_1,f_2\in\mc{O}_X$ y $\alpha\in\omega_X$, 

\begin{itemize}
    \item $(\alpha\cdot f_1)\cdot f_2 = (f_1\alpha)\cdot f_2 = f_2(f_1\alpha) = (f_2f_1)\alpha = (f_1f_2)\alpha = \alpha\cdot (f_1f_2)$.
    \item $(\alpha\cdot f)\cdot \zeta = -Lie_{\zeta}(f\alpha) = -\zeta(f)\alpha - f Lie_{\zeta}(\alpha) = (\alpha\cdot \zeta)\cdot f - \alpha\cdot \zeta(f)$
    \item $\alpha\cdot [\zeta_1,\zeta_2] = -Lie_{[\zeta_1, \zeta_2]}(\alpha) = -( Lie_{\zeta_1}(Lie_{\zeta_2}(\alpha)) - Lie_{\zeta_2}(Lie_{\zeta_1}(\alpha))) = (\alpha\cdot\zeta_1)\cdot\zeta_2 - (\alpha\cdot\zeta_2)\cdot\zeta_1  $
    \item $(\alpha\cdot f)\cdot \zeta =  -Lie_{\zeta}(f\alpha) = -Lie_{f\zeta}(\alpha) = \alpha\cdot(f\zeta) = \alpha\cdot(f\cdot \zeta)$
\end{itemize}

 Las anteriores relaciones se siguen por las propiedades de la derivada de Lie. En general un módulo a izquierda se puede considerar como un m\'odulo a derecha tensorizando por el módulo $\omega_X$.\\

\subsubsection{Ejemplo: Ecuaciones diferenciales}\label{edps} Sea $X$ una variedad algebraica (analítica) suave de dimensión $n$ y considere un operador diferencial $P$. Localmente, para un abierto $U$ de $X$ el operador $P$ se escribe como $P=\sum_{I} g_{I}\partial^{I}$, donde $g_{I}\in \mc{O}_{X}(U)$, $ I=(i_1, \ldots, i_n)\in \mb{N}^{n}$ es un multi-indice y $\partial^{I}=\partial_1^{i_1}\cdots\partial_n^{i_n}$. Se quiere solucionar la ecuación $P(f)=0$ para $f$ en algún $\mc{D}_X$-módulo $\mc{F}$. Note que la naturaleza de las soluciones que se quieran obtener apuntan a que haz $\mc{F}$ escoger, por ejemplo, si queremos soluciones polinomiales el haz a escoger es $\mc{F}=\mc{O}_X$, para el caso de soluciones analíticas tomamos el haz de funciones holomorfas.\\

 Veamos como encontrar el conjunto de soluciones de nuestro sistema de ecuaciones diferenciales. Consideremos un operador matricial $P = [P_{ij}]$ donde $i=1, \ldots p$, $j=1, \ldots, q$ y los $P_{ij}$ son operadores diferenciales. Por tanto, las soluciones al sistema $Pf=0$ son de la forma $f =(f_1, \ldots, f_q)$. Para encontrar estas soluciones del sistema de  ecuaciones $Pf=0$, considere el morfismo de $\mc{D}_X$-m\'odulos $\cdot P : \mc{D}_X^p \to \mc{D}_X^q$ definido como $\cdot P(u) = uP$. Considere ahora la sucesi\'on exacta

\[ 
\xymatrix{ \mc{D}_X^p \ar[r]^{\cdot P} & \mc{D}_X^q \ar[r] & M_P \ar[r] & 0}
\]
\vspace{0.2cm}

 donde $M_P$ denota el cokernel del morfismo $\cdot P$. Al aplicar el functor $\Hom_{\mc{D}_X}(\_ , \mc{F})$, a la anterior sucesi\'on exacta se obtiene

\[ 
\xymatrix{0 \ar[r] & \Hom_{\mc{D}_X}( M_P ,\mc{F}) \ar[r] & \Hom_{\mc{D}_X}(\mc{D}_X^q, \mc{F}) \ar[r]^{P\cdot} & \Hom_{\mc{D}_X}(\mc{D}_X^p ,\mc{F}) }
\]
\vspace{0.2cm}

 Note que $ \Hom_{\mc{D}_X}(\mc{D}_X^r, \mc{F})\cong \mc{F}^r$ mediante el morfismo $(\varphi:\mc{D}_X^r\to \mc{F})\mapsto \{\varphi(e_i)\}_{i=1}^r$ para $\{e_i\}_{i=1}^r$ la base can\'onica $\mc{D}_X^r$ como $\mc{D}_X$-m\'odulo. Por tanto, la anterior sucesi\'on se transforma en  

\[ 
\xymatrix{0 \ar[r] & \Hom_{\mc{D}_X}(M_P ,\mc{F}) \ar[r] & \mc{F}^q \ar[r]^{P\cdot} & \mc{F}^p }
\]

 luego $\Hom_{\mc{D}_X}( M_P ,\mc{F}) = \{ f\in \mc{F}^q \; | \; Pf=0  \} $
\vspace{0.2cm}

 En decir, el espacio $\Hom_{\mc{D}_X}( M_P ,\mc{F})$ contiene todas las soluciones del sistema que pertenecen al $\mc{D}_X$-m\'odulo $\mc{F}$. Se dice que el m\'odulo $M_P$ representa al sistema de ecuaciones diferenciales.

\subsubsection{Ejemplo: La exponencial}\label{exp} Sea $X=\mb{C}$ y sea $\lambda\in \mb{C}$. Defina $E_{\lambda} = \mc{D}_X/\mc{D}_X(\partial-\lambda)$. Este módulo es generado por la imagen del $1$, (el generador de $\mc{D}_X$ como $\mc{D}_X$-módulo), cuya imagen se denota en $E_{\lambda}$ por el símbolo $e^{\lambda z}$. Es decir, $E_{\lambda} \cong \mc{D}_{X} e^{\lambda z}$, y $(\partial - \lambda)e^{\lambda z} = 0$. Nótese que $\partial e^{\lambda z} = \lambda e^{\lambda z}$.\\

 Ahora, como se vio en el Ejemplo \ref{edps}, el espacio $\Hom_{\mc{D}_X}(E_{\lambda}, \mc{F})$ debe capturar la información de las soluciones del operador $P=\partial - \lambda$, que pertenecen al haz $\mc{F}$. Si se toma $\mc{F}=\mc{O}_X$, una solución $f$ que sea global debe cumplir que $f\in \mc{O}_X(X)=\mb{C}[z]$, $(\partial-\lambda)f=0$, la cual no existe. Considerando a $X$ con la topología usual y tomando $\mc{F}$ como el haz de funciones holomorfas, se encontrar\'ia que existe una sección global $f$ tal que $(\partial-\lambda)f=0$; la función exponencial usual.

\subsubsection{Ejemplo: La ``función'' $z^\lambda$}\label{xlambda} Sea $X=\mb{C}$ y sea $\lambda\in \mb{C}$. Considere el módulo $M_{\lambda} = \mc{D}_X/\mc{D}_X(z\partial-\lambda) $. Como en el ejemplo anterior \ref{exp}, a la imagen del $1$ en $\mc{D}_X$ se denotará por $z^{\lambda}$. Así se tiene que $M_{\lambda} \cong \mc{D}_X z^{\lambda}$. Además se cumple la ecuación diferencial $(z\partial-\lambda)z^{\lambda}=0$, i.e., $z\partial z^{\lambda} = \lambda z^{\lambda}$.\\

 Adicionalmente, sobre $M_{\lambda}$ se tiene lo siguiente. Defina el operador $E=z\partial$ en $\mc{D}_X$ y note que $[E,z]=z$ y $[E, \partial]=-\partial$. Por lo tanto, se tiene que $Ez=z(E+1)$ y $E\partial = \partial(E-1)$. Luego en el módulo $M_{\lambda}$, $z$ y $\partial$ son vectores propios del operador $E$ con valores propios $\lambda+1$ y $\lambda -1$ respectivamente. Es decir, se ha obtenido que $Ez=z(\lambda+1)$ y $E\partial = \partial(\lambda-1)$. Por inducción en $n$ se sigue que $Ez^n=z^n(\lambda+n)$ y $E\partial^n = \partial^n(\lambda-n)$.\\

 Defina las siguientes acciones $z\cdot z^{\lambda} = z^{\lambda+1}$, $\partial\cdot z^{\lambda} = \lambda z^{\lambda -1}$ y $E\cdot z^{\lambda} = (\lambda)z^{\lambda}$. Si $\lambda\notin \mb{Z}$, ninguna de las anteriores acciones se anula, por lo tanto, el módulo $M_{\lambda}$ es irreducible. Caso contrario si $\lambda\in \mb{Z}$, existir\'a una potencia de $z$ que se anula y por tanto la acción de $\partial$ en este elemento se anula, así el módulo será reducible.\\

 Los ejemplos \ref{ox}, \ref{delta}, \ref{edps}, \ref{exp} y \ref{xlambda} son, adem\'as, ejemplos de módulos holonómicos.

\section{Aplicaciones}

 En esta secci\'on, se presentan dos de las aplicaciones m\'as c\'elebres de la teoría de los $\mc{D}$-m\'odulos. La correspondencia de Riemman-Hilbert y el teorema de localizaci\'on de Beilinson y Bernstein.\\

 La correspondencia de Riemann-Hilbert, en su forma original se pregunta sobre la existencia de sistemas de ecuaciones diferenciales regulares, dada una representación del grupo fundamental de la variedad en la cual está definido el sistema. En t\'erminos m\'as generales, la correspondencia de Riemann-Hilbert afirma que conexiones singulares regulares sobre una variedad fija están en correspondencia uno a uno con representaciones de su grupo fundamental. En el libro \cite{D}, P. Deligne demuestra este teorema en el contexto de haces y categorías derivadas. Una introducci\'on a este tema se puede encontrar en la primera parte del libro \cite{RTT} y en los cap\'itulos 3 y 4 del libro \cite{Bo}.\\

 La segunda aplicaci\'on, el teorema de localizaci\'on de Beilinson y Bernstein, afirma que toda representaci\'on de un \'algebra de Lie semisimple se puede entender geom\'etricamente como un $\mc{D}$-m\'odulo sobre la variedad bandera del \'algebra de Lie. Esta importante relaci\'on permiti\'o demostrar las conjeturas de Kazhdan-Lusztig las cuales permiten identificar todos los m\'odulos simples que componen a un m\'odulo de Verma, (un m\'odulo indescomponible de peso m\'aximo), es decir, permiten conocer el caracter de cualquier representaci\'on irreducible en t\'erminos de los m\'odulos de Verma. Además de esto, este teorema dió inicio a la teor\'ia geom\'etrica de representaciones. La demostración de este teorema está publicada en el art\'iculo \cite{BB}. Una introducci\'on compacta al tema se encuentra en las notas de D. Gaitsgory ``{\it Geometric Representation Theory}'', \cite{Ga}. En la segunda parte del libro \cite{RTT} se encuentra el tema de manera comprensiva además de la demostraci\'on de las conjeturas de Kazhdan - Lusztig. En los art\'iculos \cite{KL} y \cite{KL2} se puede encontrar el planteamiento original de las conjeturas de Kazhdan - Lusztig y su estudio utilizando geometr\'ia algebraica. \\ 

\subsection{La correspondencia de Riemman - Hilbert} Sea $X$ una variedad algebraica suave. En la sección \ref{Dmoddef} se defini\'o una conexión como un $\mc{D}_X$ módulo que es $\mc{O}_X$ coherente. En particular, éstos módulos son  $\mc{D}_X$-módulos localmente libres de rango finito sobre $\mc{O}_X$. La categoría de las conexiones sobre $X$ se denota por $Conn(X)$.\\

 Sea $\mc{F}\in Conn(X)$. Es claro que la estructura de $\mc{D}_X$-módulo de $\mc{F}$ es equivalente a la existencia de un morfismo de álgebras de Lie $\nabla : \mc{T}_X \rightarrow End_{\mb{C}}(\mc{F})$. Por lo tanto, tiene sentido considerar secciones $s$ tales que $\nabla_{\theta}(s)=0$ para toda sección del haz tangente $\theta$. Considere el haz $\mc{L}_{\mc{F}}$ definido por las secciones para las cuales la conexión se anula, es decir, para $U\subseteq X$ abierto, sea $\mc{L}_{\mc{F}}(U) = \{ s\in \mc{F}(U) \;|\; \nabla_{\theta}(s) = 0\; \forall \theta\in \mc{T}_X \} $. Se puede mostrar que $\mc{L}_{\mc{F}}$ es un haz localmente isomorfo a $\mb{C}$, es decir, es un haz de $\mb{C}$-espacios vectoriales. Este tipo de haces se llaman \emph{sistemas locales}. Recíprocamente, dado un sistema local $\mc{L}$, se puede construir una conexión como $\mc{F}_{\mc{L}} := \mc{O}_X\otimes_{\mb{C}}\mc{L}$.\\

 Con lo anterior se puede escribir una primera instancia de la correspondencia de Riemman - Hilbert:

\begin{teo}[\cite{RTT}, Corolario 5.3.10]\label{teo2}
La categoría de conexiones sobre $X$ es equivalente a la categoría de sistemas locales sobre $X$.
\end{teo}

 Existe una versión mucho más sofisticada del Teorema \ref{teo2} el cual va m\'as all\'a de los propósitos de estas notas. No sobra decir que este Teorema es un primer resultado en el estudio de sistemas de ecuaciones diferenciales en dominios del plano complejo. La idea es que dada una ecuación diferencial es posible asociarle una matriz (llamada \emph{monodromía}) que permite medir la diferencia entre las soluciones locales y globales del sistema. Adicionalmente, esta matriz corresponde con a una representación del grupo fundamental del dominio sobre el cual est\'a definido el sistema en cuestión. Esta representación a su vez se puede interpretar como un sistema local en el dominio de interés. La pregunta original, el problema número 21 de Hilbert, es el siguiente. Dada una matriz de este tipo (i.e. de monodromía) es siempre posible encontrar una ecuación diferencial que realice tal matriz. La respuesta fue afirmativa gracias al Teorema \ref{teo2}. 

\subsection{El teorema de Localización de Beilinson y Bernstein} Como se vió en el Ejemplo \ref{locBBP1}, $\phi:U(\mf{sl}_2(\mb{C}))/ \langle C \rangle \rightarrow \Gamma(\mb{P}^1, \mc{D}_{\mb{P}^1})$ es un isomorfismo de álgebras, donde $C$ es el elemento de Casimir. Se puede mostrar que $C$ genera el centro $Z=Z(U(\mf{sl}_2(\mb{C}))$ del álgebra envolvente $U(\mf{sl}_2(\mb{C}))$, por lo tanto el isomorfismo $\phi$ se escribe as\'i $U(\mf{sl}_2(\mb{C}))/ Z\cdot U(\mf{sl}_2(\mb{C})) \rightarrow \Gamma(\mb{P}^1, \mc{D}_{\mb{P}^1})$. Este isomorfismo se puede extender a una versi\'on m\'as general v\'alido para cualquier álgebra de Lie semisimple.\\

 Sea $\g$ un álgebra de Lie semisimple y sea $U(\g)$ el álgebra envolvente de $\g$. Se denota por $Z$ el centro de $U(\g)$. Sea $X$ la variedad bandera de $\g$, es decir, la variedad que tiene como elementos todas las subálgebras de Borel de $\g$. En particular para $\g=\mf{sl}_n(\mb{C})$, $X = \{F = (0 = V_0 \subset V_1 \subset \cdots \subset V_{n-1} \subset V_{n} = \mb{C}^n )\; | \; \dim_{\mb{C}}V_i = i\}$. Así, para $\mf{sl}_2(C)$, $X$ es el espacio de líneas en $\mb{C}^2$, es decir, $X=\mb{P}^1$.\\

 Sea $\mc{D}_X$ el haz de operadores diferenciales en la variedad bandera $X$ y $\Gamma(X, \mc{D}_X)$ el conjunto de secciones globales.

\begin{teo}[\cite{Ga}, Teorema 6.18]\label{teo1} 
 Existe un isomorfismo de álgebras $U(\g)/Z\cdot U(\g) \rightarrow \Gamma(X, \mc{D}_X)$.
\end{teo}

 M\'as aún, se puede demostrar que el functor $\Gamma(X, \;)$ no tiene cohomología en grado superior y que todo $\mc{D}_X$-módulo $\mc{F}$ está generado por las secciones globales. A partir de estos resultados junto con el Teorema \ref{teo1} se puede demostrar el siguiente:

\begin{teo}[Beilinson - Bernstein, \cite{BB}]
La categoría de $U(\g)$-m\'odulos con acción trivial del centro (i.e. la categoría de $U(\g)/Z\cdot U(\g)$-módulos) es equivalente a la categoría de $\mc{D}_X$-módulos que son $\mc{O}_X$-cuasi-coherentes.
\end{teo}

 \begin{proof} (Idea de la demostraci\'on.) El functor $\Gamma(X, \_)$ tiene por adjunto a izquierda el functor $Loc(\quad) = \mc{D}_X\otimes_{U(\g)/Z\cdot U(\g)}(\quad)$. Por lo tanto tenemos el morfismo de adjunci\'on $1_{U(\g)/Z\cdot U(\g)-\md}\Rightarrow  \Gamma(X, \quad) \circ Loc$. Ahora, para $I, J$ dos conjuntos, considere $N\in U(\g)/Z\cdot U(\g)-\md$ y la secuencia exacta $(U(\g)/Z\cdot U(\g))^{I}\rightarrow(U(\g)/Z\cdot U(\g))^{J}\rightarrow N \rightarrow 0$. Entonces, obtenemos el siguiente diagrama conmutativo

\[\xymatrix{ (U(\g)/Z\cdot U(\g))^{I} \ar[r] \ar[d] & (U(\g)/Z\cdot U(\g))^{J} \ar[r] \ar[d] &  N \ar[r] \ar[d] & 0\\
\Gamma(X, Loc((U(\g)/Z\cdot U(\g))^{I}))  \ar[r] & \Gamma(X, Loc((U(\g)/Z\cdot U(\g))^{J}))  \ar[r] & \Gamma(X, Loc(N))  \ar[r] & 0.}  \]
\vspace{0.1cm}

 Por el teorema \ref{teo1}, $\Gamma(X, Loc((U(\g)/Z\cdot U(\g))^{K}))\cong \Gamma(X, (\mc{D}_X)^{K})) \cong (U(\g)/Z\cdot U(\g))^{K}$  para $K=I,J$, as\'i $\Gamma(X, Loc(N))\cong N$. An\'alogamente, para $\mc{M}$ un $\mc{D}_{X}$-m\'odulo se obtiene $Loc(\Gamma(X,\mc{M}))\cong \mc{M}$. 

\end{proof}



\end{document}